\theoremstyle{definition} %%% for statements in roman typeface
 \newtheorem{definition}{Definition}[section]
 \newtheorem{remark}[definition]{Remark}
\theoremstyle{plain}      %%% for statements in italic typeface
 \newtheorem{proposition}[definition]{Proposition}
 \newtheorem{theorem}[definition]{Theorem}
 \newtheorem{corollary}[definition]{Corollary}
 \newtheorem{lemma}[definition]{Lemma}
\def\Z{\mathbb{Z}}
\def\rb{\mathbb{R}}
\def\zb{\mathbb{Z}}
\def\mc{\mathcal{M}}
\def\rc{\mathcal{R}}
\begin{document}

\title{
On the spectrum of the Thue-Morse quasicrystal
and the rarefaction phenomenon}

\author{Jean-Pierre Gazeau~
and \\ 
Jean-Louis Verger-Gaugry
}

\address{}

\maketitle

\noindent
{\bf R\'esum\'e}
On explore le spectre d'un peigne de Dirac 
pond\'er\'e support\'e par le quasicristal de Thue-Morse,
et on le caract\'erise \`a un ensemble de 
mesure nulle pr\`es, au moyen de la Conjecture de
Bombieri-Taylor, pour les pics de Bragg, et 
d'une autre conjecture que l'on appelle Conjecture de 
Aubry-Godr\`eche-Luck, pour la composante singuli\`ere continue.
La d\'ecomposition de la transform\'ee de Fourier
du peigne de Dirac pond\'er\'e est obtenue dans 
le cadre de la th\'eorie des distributions
temp\'er\'ees.
Nous montrons que l'asymptotique de l'arithmetique des
sommes $p$-rar\'efi\'ees de Thue-Morse
(Dumont; Goldstein, Kelly and Speer;
Grabner; Drmota and Skalba,...), pr\'ecis\'ement
les fonctions fractales des sommes de chiffres,  jouent
un r\^ole fondamental dans la description de la composante singuli\`ere continue
du spectre, combin\'ees \`a des r\'esultats classiques
sur les produits de Riesz de Peyri\`ere et de
M. Queff\'elec. Les lois d'\'echelle dominantes
des suites de mesures approximantes sont contr\^ol\'ees 
sur une partie de la composante singuli\`ere continue
par certaines in\'egalit\'es dans lesquelles 
le nombre de classes de diviseurs et le r\'egulateur
de corps quadratiques r\'eels interviennent.

\vspace{0.2cm}

\begin{abstract} 
The spectrum of a weighted Dirac comb on the Thue-Morse quasicrystal
is investigated, and characterized up to a measure zero set, 
by means of the 
Bombieri-Taylor conjecture, for Bragg peaks, 
and of another conjecture
that we call Aubry-Godr\`eche-Luck conjecture, 
for the singular continuous component.
The decomposition of the Fourier transform of the weighted Dirac comb is
obtained in terms of tempered distributions.
We show that the asymptotic arithmetics of the 
$p$-rarefied sums of the 
Thue-Morse sequence (Dumont; Goldstein, Kelly and Speer; 
Grabner; Drmota and Skalba,...), 
namely the fractality of sum-of-digits functions, 
play a fundamental role
in the description of the 
singular continous part of the spectrum, combined with
some classical results on
Riesz products of Peyri\`ere and M. Queff\'elec.
The dominant scaling of the sequences of approximant measures
on a part of the singular component is controlled by certain inequalities 
in which are involved the class number and the regulator
of real quadratic fields.
\end{abstract}

\vspace{1.5cm}
{\it In honor of the $60$-th birthday of Henri Cohen...}
\vspace{2cm}

{\bf 2000 Mathematical Subject Classification}:
11A63, 11B85, 42A38, 42A55, 43A30, 52C23, 62E17. 

\vspace{0.5cm}

{\bf Key words}: 
Thue-Morse quasicrystal, spectrum, singular continuous component, rarefied sums, 
sum-of-digits fractal functions,
approximation to distribution. 

\vspace{0.5cm}

\tableofcontents  %comment this command in if you want a table of contents
\newpage
\section{Introduction}
\label{S1}

The $\pm$ Prouhet-Thue-Morse sequence $(\eta_n)_{n \in \mathbb{N}}$
is defined by     
\begin{equation}
\label{tm}
\eta_n = (-1)^{s(n)} \, \qquad n \geq 0, 
\end{equation}
where $s(n)$ is equal to the sum of the $2$-digits
$n_0 + n_1 + n_2 + \ldots $ 
in the binary expansion of 
$n = n_0 + n_1 2 + n_2 2^2 + \ldots$
It can be viewed as a
fixed point of the substitution $1 \to 1 ~\overline{1},
\overline{1} \to \overline{1} ~1$ on the two letter alphabet
$\{\pm 1\}$, starting with $1$ ($\overline{1}$ stands for $-1$).
There exists a large literature on this sequence
\cite{allouchemendesfrance} \cite{queffelec1}.
Let $a$ and $b$ be two positive real numbers such that
$0 < b < a$. Though there exists an infinite number of 
ways of constructing a regular aperiodic
point set of the line \cite{lagarias1}
from the sequence 
$(\eta_n)_{n \in \mathbb{N}}$, we adopt the following 
definition, which seems to be fairly canonical.
We call Thue-Morse quasicrystal, denoted by
$\Lambda_{a,b}$, or simply by $\Lambda$ (without 
mentioning the parametrisation by $a$ and $b$),  
the point set
\begin{equation}
\label{tmqc}
\Lambda ~:=~ \Lambda^{+} \cup (- \Lambda^{+}) \quad \subset \mathbb{R}
\end{equation}
where $\Lambda_{a,b}^{+}$, or simply $\Lambda^{+}$, 
on $\mathbb{R}^{+}$, is equal to 
\begin{equation}
\label{tmqc}
\{0\} \cup \Bigl\{ f(n) := \sum_{0 \leq m \leq n-1} 
\bigl( \frac{1}{2} (a+b) + \frac{1}{2} (a-b) \eta_m \bigr) \mid n = 1, 2, 3, \ldots 
\Bigr\}.  
\end{equation} 
The function
$f$ defined by \eqref{tmqc} is extended to $\mathbb{Z}$ by 
symmetry: we put 
\begin{equation}
\label{tmqc1}
f(0) = 0 ~\mbox{by convention and}~ 
f(n) = -f(-n) ~\mbox{for}~ n \in \mathbb{Z}, \ n < 0.
\end{equation}
For all $n \in \mathbb{Z}$,
$|f(n+1)-f(n)|$ is equal either to $a$ 
or $b$ so that the closed (generic) intervals 
of respective lengths $a$ and $b$  
are the two prototiles of the aperiodic 
tiling of the line $\mathbb{R}$
whose $(f(n))_{n \in \mathbb{Z}}$ 
is the set of vertices. 
Though it is easy to check that
$$(a+b) \, \mathbb{Z} ~\subset~ \Lambda_{a,b}$$
and moreover that
$\Lambda_{a,b}$ is a Meyer set 
\cite{lagarias1}  
\cite{moody} \cite{vergergaugry},  
i.e. there exists a finite set $F
= \{\pm a, \pm b, \pm 2 a, \pm 2 b, \pm a \pm b\}$ such that
$$\Lambda_{a,b} ~-~ \Lambda_{a,b} ~\subset~  \Lambda_{a,b} + F,$$
the Thue-Morse quasicrystal, 
and any weighted Dirac comb on it,
is considered as a somehow myterious point set, intermediate between 
chaotic, or random, and periodic \cite{aubrygodrecheluck}
\cite{axelterauchi} \cite{bai} \cite{chengsavitmerlin}
\cite{godrecheluck1} \cite{godrecheluck2}
\cite{kolariochumraymond}
\cite{luck}
\cite{peyrierecockayneaxel}
\cite{wolnywnekvergergaugry}, and
the interest for such systems in physics 
is obvious from many viewpoints.  

In this note 
we study the spectrum of a weighted Dirac comb $\mu$ on 
the point set
$\Lambda_{a,b}$ by using arithmetic methods, more precisely by 
involving sum-of-digits fractal fonctions associated with
the rarefied sums of the Thue-Morse sequence 
(Coquet \cite{coquet}, Dumont \cite{dumont},
Gelfond \cite{gelfond}, Grabner \cite{grabner1}, Newman \cite{newman},
Goldstein, Kelly and Speer \cite{goldsteinkellyspeer},
Drmota and Skalba \cite{drmotaskalba1}
\cite{drmotaskalba2}, ...). 
For this, we hold for true two conjectures which are expressed
in terms of scaling laws of approximant measures: 
the Bombieri-Taylor Conjecture and a
conjecture that we call Aubry-Godr\`eche-Luck Conjecture
(Subsection \ref{S3.2}). 
In the language of physics, 
the spectrum measures the extent to which
 the intensity diffracted 
by $\mu$ is concentrated at 
a real number $k$ (wave vector). It  
it can be observed by the 
square modulus of the Fourier transform of $\mu$ at $\{k\}$
\cite{cowley} \cite{guinier}, or eventually 
of its autocorrelation \cite{hof} \cite{lagarias2}.

On one hand 
the spectrum of the symbolic dynamical system
associated with the Prouhet-Thue-Morse sequence 
is known to be singular continuous: 
if 
\begin{equation}
\label{TFTM}
\widehat{\eta_{n}}(k) := \sum_{j=0}^{2^n - 1} \, \eta_j \exp (-2 i \pi j k) 
\end{equation} 
denotes its Fourier transform, then
\begin{equation}
\label{rieszTM}
|\widehat{\eta_{n}}(k)|^2 = 2^n \, \prod_{j=0}^{n-1} \, (1 - \cos(2 \pi 2 ^j k ))  
~=~ 2^{2 n} \, \prod_{j=0}^{n-1} \, \sin^{2} (\pi 2 ^j k)
\end{equation}
is a Riesz product constructed on the sequence
$(2^j)_{j \geq 0}$ 
which has the property that
the sequence of measures
\begin{equation}
\label{bellesuite}
\left\{ 2^{-n} |\widehat{\eta_{n}}(k)|^2 dk
\right\}_{n \geq 0}
\end{equation}  
has a unique accumulation point, its limit, 
for the vague topology,
which is a singular continuous measure
(Peyriere \cite{peyriere} \S 4.1, 
Allouche, Mend\`es-France \cite{allouchemendesfrance} Appendix I, p. 337). 
On the other hand, 
Queff\'elec (\cite{queffelec2} \S 6.3.2.1) 
has shown that replacing
the alphabet $\{\pm 1 \}$ by
$\{0, 1\}$ leads to a new component
to the measure, which is discrete and exactly
localized at the elements of the group $\mathbb{D}_2$
of $2$-adic rational numbers in the one-dimensional torus
$\mathbb{R}/2 \pi \mathbb{Z}$, explicitly
\begin{equation}
\label{DD2}
\mathbb{D}_2 := \{2 \pi \frac{m}{2^n} \mid 0 \leq m \leq 2^n , n = 0, 1, 2, \ldots \}.
\end{equation}  
Therefore the spectrum of $\Lambda_{a,b}$, 
and more generally the Fourier transform
of a weighted Dirac comb
$\mu$ supported by $\Lambda_{a,b}$, 
is expected
to be the sum of a discrete part and a singular 
continuous part, each of them being
a function of $a$ and $b$.
In Subsection \ref{S3.1}, Theorem \ref{fourierTMtheo} 
(the proof of which is similar to the proof
of Theorem 4.1 in \cite{gazeauvergergaugry} 
for sets $\mathbb{Z}_{\beta}$ of $\beta$-integers,
with $\beta$ a quadratic unitary Pisot number) 
shows that it is the case 
in the context of tempered distributions. 

We explore the particular Dirac comb
on $\Lambda_{a,b}$ 
for which the weights are all equal to $1$ on the real positive line.
In Subsection \ref{S3.3} we deduce the Bragg component of its spectrum 
by classical results on Riesz products 
of Peyri\`ere \cite{peyriere} and Queff\'elec \cite{queffelec2} and
by the Bombieri-Taylor conjecture (Theorem \ref{braggqc}).
In Subsection \ref{S3.4} we show the deep relation 
between the $p$-rarefied sums of the Thue-Morse
sequence and the singular continuous component 
of its spectrum.  
We use the sum-of-digits 
fractal functions of the rarefaction phenomenon, 
recalled in Section \ref{S4}, 
in  agreement with the Aubry-Godr\`eche-Luck
argument (Subsection \ref{S3.2}) 
to deduce, in Subsection \ref{S5.1}, the 
singular part $\mathbb{S}$
of the spectrum, up to a zero measure
subset 
%of $\mathbb{R} \setminus \frac{4 \pi}{a+b} \mathbb{Q}$ 
(Theorem \ref{singgg}).
In the subsequent Subsections
we explore the extinctions properties and the questions of visibility
of the singular component of the spectrum as a function of the 
sequence of finite approximants.   
This leads to 
characterize prime numbers 
$p$ for which the sequences of approximant measures at the wave vectors
$k = \frac{4 \pi}{a+b} \frac{t}{2^h p}$ have the property
of being ``size-increasing" at $k$; more generally, for some classes of  
prime numbers $p$ the Cohen-Lenstra heuristics of real quadratic fields
is required \cite{cohenlenstra1} \cite{cohenlenstra2} \cite{cohenmartinet}. 
In Section \ref{S6} we indicate how to deal with 
other weighted Dirac combs on the Thue-Morse quasicrystal
$\Lambda_{a,b}$  and
prove that the diffraction process remains somehow invariant
under the Marcinkiewicz
equivalence relation, for the
Bragg peaks, by the Bombieri-Taylor argument.

In addition to the mathematical interest which 
links diffraction spectra to arithmetics
as mentioned in the title, the present results 
represent the solutions of many experimental 
questions of physicists on Thue-Morse quasicrystals 
which remained unexplained until now.

%However the main difference
%between the quasicrystals $\mathbb{Z}_{\beta}$ and
%$\Lambda_{a,b}$ comes from the existence 
%of a ``singular" continuous component in the case of $\Lambda_{a,b}$
%whereas it is inexistant for $\mathbb{Z}_{\beta}$
%(since it is a model set \cite{moody} \cite{vergergaugry}).
%In Subsections \ref{S3.3}
%\ref{S3.4} we show that the regular tempered ditribution
%associated with the continuous  
%component of the spectral measure of $\Lambda_{a,b}$
%is deeply linked to the 
%rarefaction phenomenon.   

\section{Averaging sequences of finite approximants}
\label{S2}

\begin{definition}
\label{defiexhaust}
An averaging sequence $(U_l)_{l \geq 0}$ of finite approximants
of $\Lambda_{a,b}$ is given by a closed interval $J$ whose interior
contains the origin and a strictly increasing sequence
$(\rho_{l})_{l \geq 0}$ of positive numbers such that
$$U_l ~=~ \rho_l J \cap \Lambda_{a,b}, \qquad \quad l= 0, 1, 2, \ldots$$
\end{definition}

A natural averaging sequence of such approximants 
for the Thue-Morse quasicrystal
is yielded by the sequence
$((-B_{N}) \cup B_{N})_{N \geq 0}$, where
\begin{equation}
\label{canoexhaust}
B_N ~=~ \{ x \in \Lambda_{a,b} \mid 0 \leq x \leq f(N)\} 
\qquad \quad N=0, 1, 2, \ldots
\end{equation}    
Obviously \, Card$(B_N) = 2 N + 1$ for $N=0, 1, 2, \ldots$

We have the same notions 
for a Radon measure 
$\mu = \sum_{n \in \mathbb{Z}} \, \omega(n) \delta_{f(n)}$
supported by $\Lambda_{a,b}$, where
$\omega(x)$ is a bounded complex-valued function, called weight, and
where $\delta_{f(n)}$ denotes the normalized
Dirac measure supported by the singleton
$\{f(n)\}$.
The support of the weight fonction $\omega$ is denoted
$$\mbox{supp}(\omega) := \{n \in \mathbb{Z} \mid \omega(n) \neq 0 \}.$$
Denote 
$$\Lambda_{a,b}^{\mbox{{\scriptsize supp}}(\omega)} ~=~ \bigcup_{n \in \, \mbox{{\scriptsize supp}}(\omega)} \, \{f(n)\}.$$
Then an averaging sequence $(U_l)_{l \geq 0}$ of finite approximants
of $\Lambda_{a,b}^{\mbox{{\scriptsize supp}}(\omega)}$ 
is given by a closed interval $J$ whose interior
contains the origin and a strictly increasing sequence
$(\rho_{l})_{l \geq 0}$ of positive numbers such that
$$U_l ~=~ \rho_l J \cap \Lambda_{a,b}^{\mbox{{\scriptsize supp}}(\omega)}, 
\qquad \quad l= 0, 1, 2, \ldots.$$

Let us denote 
$\mathcal{A}_{a,b}$, resp. $\mathcal{A}_{a,b}^{\mbox{{\scriptsize supp}}(\omega)}$,    
the set of the averaging sequences of finite
approximants of $\Lambda_{a,b}$, 
resp. $\Lambda_{a,b}^{\mbox{{\scriptsize supp}}(\omega)}$.
If
the affine hull of supp$(\omega)$
is $\mathbb{R}$, then 
$\mathcal{A}_{a,b} = \mathcal{A}_{a,b}^{\mbox{{\scriptsize supp}}(\omega)}$.

\section{Diffraction spectra}
\label{S3}

\subsection{Fourier transform of a weighted Dirac comb on $\Lambda_{a,b}$}  
\label{S3.1}

Let us consider the following Radon measure 
supported by the Thue-Morse quasicrystal
$\Lambda_{a,b}$:
\begin{equation}
\label{radonmu}
\mu = \sum_{n \in \mathbb{Z}} \, \omega(n) \delta_{f(n)}. 
\end{equation}
where $\omega(x)$ is a bounded complex-valued 
function (weight function). 
Denote by $\delta$ the Dirac measure, i.e.
such that $\delta(\{0\})=1$.
The measure \eqref{radonmu} is translation bounded, 
i.e. for all compact $K \subset \mathbb{R}$
there exists $\alpha_K$ such that 
$\sup_{\alpha \in \mathbb{R}} \, |\mu|(K+\alpha)
\leq \alpha_K$, and so is a tempered distribution.
Its Fourier transform 
$\widehat{\mu}$
is also a tempered distribution defined by
\begin{equation}
\label{mufourier}
\widehat{\mu}(k) = \mu(e^{- i k x }) = 
\sum_{n \in \mathbb{Z}} \, \omega(n) e^{- i k f(n) }
\end{equation}
It may or may not be a measure.

Let us now transform \eqref{mufourier}
using the general form of $f(n)$, $n \in \mathbb{Z}$, 
given by
\eqref{tmqc} and \eqref{tmqc1}. For a real 
number $x \in \mathbb{R}$, the integer part
of $x$ is denoted by $\lfloor x \rfloor$ and its fractional part by
$\{x\}$.

\begin{lemma}
\label{lem1}
If $n \in \mathbb{Z}$ is even, then $f(n) = n (a+b)/2$. If
$n \geq 1$ is odd, then 
\begin{equation}
\label{lem1eq}
f(n) ~=~ \frac{n}{2} (a+b) + \frac{a-b}{2} \eta_{n-1}.  
\end{equation}
\end{lemma}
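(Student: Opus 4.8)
The plan is to reduce the statement to evaluating the partial sums of the Thue-Morse sequence. First I would note that for $n \geq 1$ the defining sum \eqref{tmqc} has exactly $n$ terms, so it splits into a linear part and a correction term,
\begin{equation*}
f(n) = \frac{n}{2}(a+b) + \frac{a-b}{2}\,S(n), \qquad S(n) := \sum_{m=0}^{n-1}\eta_m,
\end{equation*}
and the whole lemma comes down to showing that $S(n)=0$ when $n$ is even and $S(n)=\eta_{n-1}$ when $n$ is odd.

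The key step is the cancellation identity $\eta_{2k}+\eta_{2k+1}=0$ for every $k \geq 0$. This is immediate from the digit-sum recursion: appending a binary digit gives $s(2k)=s(k)$ and $s(2k+1)=s(k)+1$, whence $\eta_{2k+1}=-\eta_{2k}$ by \eqref{tm}. For $n=2N\geq 0$ even I would then group the sum into the $N$ consecutive pairs $(\eta_{2k},\eta_{2k+1})$, $0\leq k\leq N-1$; each pair vanishes, so $S(2N)=0$ and $f(2N)=N(a+b)=n(a+b)/2$, the case $n=0$ reducing to the stated convention. To extend to $n$ even and negative I would invoke the symmetry $f(n)=-f(-n)$ of \eqref{tmqc1}, which turns the positive even case into $f(n)=-(-n)(a+b)/2=n(a+b)/2$.

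For $n=2N+1\geq 1$ odd I would peel off the last term and use the even case already established:
\begin{equation*}
S(2N+1) = S(2N) + \eta_{2N} = \eta_{2N} = \eta_{n-1}.
\end{equation*}
Substituting back yields $f(n)=\frac{n}{2}(a+b)+\frac{a-b}{2}\eta_{n-1}$, as claimed. There is no genuine obstacle here: once the single identity $\eta_{2k}+\eta_{2k+1}=0$ is in hand, the only care needed is the index bookkeeping in the pairing and the correct sign when extending through the symmetry convention.
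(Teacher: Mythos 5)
Your proof is correct and rests on exactly the same key fact as the paper's one-line proof, namely that $\eta_{m}+\eta_{m+1}=0$ for every even $m\geq 0$; you have simply written out the routine bookkeeping (the split of $f(n)$ into linear part plus $\frac{a-b}{2}S(n)$, the pairing argument for even $n$, and peeling off the last term for odd $n$) that the paper declares obvious.
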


\begin{proof}
Obvious since,  for every even integer $m \geq 0$, 
$\eta_m + \eta_{m+1} = 0$.   
\end{proof}

We deduce that $f(n)$ has the general form
$f(n) = \alpha_{1} n + \alpha_0 + \alpha_2 \{x(n)\}$
for all integers $n \geq 1$, where 
$$
\displaystyle
\begin{array}{l}
\alpha_{1} ~=~ (a+b)/2\\
\alpha_0 ~=~ -(a-b)/2\\
\alpha_2 ~=~ 2 (a-b)\\
x(n) ~=~ (1 + \eta_{n-1})/4.  
\end{array}
$$
Indeed, $x(n) \in [0, 1)$ is equal to its fractional part.
Now, for any $x \in \mathbb{R}$, the ``fractional part"
function $x \to \{x\}$ is periodic of period $1$ and 
so is the piecewise
continuous function $e^{-i k \alpha_2 \{x\}}$.  
Let 
\begin{eqnarray}
c_m (k) & = & \int_{0}^{1} \, e^{-i k \alpha_2 \{x\}} e^{- 2 \pi i m x} dx\\
& = & i \frac{e^{- i k \alpha_2} - 1}{2 \pi m + k \alpha_2}
~=~
(-1)^m e^{- i k \frac{\alpha_2}{2}} 
\mbox{sinc} 
\bigl(  
\frac{\alpha_2}{2} k + m \pi
\bigr)
\label{cmk}
\end{eqnarray}
be the coefficients of the expansion of $e^{-i k \alpha_2 \{x\}}$ in Fourier
series. In 
\eqref{cmk}, sinc denotes the cardinal sine function
sinc$(x) = \sin x / x$. Thus
\begin{equation}
\label{devpp1}
e^{-i k \alpha_2 \{x\}} ~=~ \sum_{-\infty}^{+\infty} \, c_m (k) e^{2 i \pi m x}
\end{equation}
where the convergence is punctual in the usual sense of Fourier
series for piecewise continuous functions.
Thus
$$\sum_{n \geq 1} \, \omega(n) e^{- i k f(n)} ~=~ 
\sum_{n \geq 1} \, \omega(n) e^{- i k
\bigl[
\frac{n}{2} (a+b) + \frac{1}{2} (a-b) \eta_{n-1}
\bigr]}
$$
$$=
\sum_{n \geq 1} \, \omega(n) e^{- i k
\bigl[
\frac{n}{2} (a+b) - \frac{a-b}{2}\bigr]}
\times \left( 
\sum_{m \in \mathbb{Z}} \, c_m (k) e^{2 i \pi m \frac{1 + \eta_{n-1}}{4}
}
\right)
$$
$$=
\sum_{m \in \mathbb{Z}} \sum_{n \geq 1} \,
c_m (k) \omega(n) e^{- i k
\bigl[
\frac{n}{2} (a+b) - \frac{a-b}{2}\bigr]}
\times
e^{2 i \pi m \frac{1 + \eta_{n-1}}{4}}.$$

\begin{lemma}
\label{lem2}
For every integer $n \geq 1$,
\begin{equation}
\label{lem2eq}
s(n) ~=~ \frac{(-1)^{s(n)}-1}{2} \qquad (\mbox{{\rm mod}} ~2).
\end{equation}
\end{lemma}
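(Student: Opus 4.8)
The plan is to reduce the assertion to a one-line parity identity. The key observation is that the right-hand side of \eqref{lem2eq} depends only on the parity of $s(n)$: since $(-1)^{s(n)}-1$ equals $0$ when $s(n)$ is even and $-2$ when $s(n)$ is odd, the integer $\frac{(-1)^{s(n)}-1}{2}$ is $0$ in the first case and $-1$ in the second. In particular the expression is always a well-defined integer, so the congruence \eqref{lem2eq} makes sense.

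Next I would record the general elementary fact that for every integer $m$ one has $\frac{(-1)^m-1}{2} \equiv m \pmod 2$. This is proved by the same two-case split: if $m$ is even then both $m$ and $\frac{(-1)^m-1}{2}=0$ are $\equiv 0 \pmod 2$, while if $m$ is odd then $m \equiv 1$ and $\frac{(-1)^m-1}{2} = -1 \equiv 1 \pmod 2$. Specializing to $m = s(n)$ yields \eqref{lem2eq} immediately, and recalling $\eta_n = (-1)^{s(n)}$ shows how the statement feeds into rewriting the phase factor $e^{2 i \pi m (1+\eta_{n-1})/4}$ appearing in the Fourier expansion above.

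There is no genuine obstacle here: the whole content is the remark that both sides of \eqref{lem2eq} are functions of $s(n) \bmod 2$ alone, and the verification is a finite check of the two residue classes. If anything, the only point requiring a little care is the sign bookkeeping, namely that $-1 \equiv 1 \pmod 2$, which is precisely what makes the odd case match.
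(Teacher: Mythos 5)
Your proof is correct and is exactly the two-case parity check that the paper implicitly relies on when it dismisses the lemma as ``Obvious'': $\frac{(-1)^{s(n)}-1}{2}$ equals $0$ or $-1$ according as $s(n)$ is even or odd, and $-1\equiv 1 \pmod 2$. Nothing further is needed.
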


\begin{proof}
Obvious.  
\end{proof}

We deduce
$$\sum_{n \geq 1} \, \omega(n) e^{- i k f(n)} ~=~
\sum_{m \in \mathbb{Z}} \sum_{n \geq 1} \,
c_m (k) \omega(n) e^{- i k
\bigl[
\frac{n}{2} (a+b) - \frac{a-b}{2}\bigr]}
\times
e^{i \pi m (1 + s(n-1))},
$$
and then,
since $e^{i \pi} = -1$, 
$$=~
\sum_{m \in \mathbb{Z}} \sum_{n \geq 1} \,
\left(
c_m (k) \, (- \eta_{n-1})^{m} \, \omega(n) 
e^{ - i \pi k  
\bigl(
n -\frac{a-b}{a+b}
\bigr)
\frac{(a+b)}{2 \pi}
}\right).
$$
Therefore
$$\widehat{\mu}(k) ~=~ \sum_{n \geq 1} \, \omega(-n) e^{+ i k f(n)} + \omega(0) +
\sum_{n \geq 1} \, \omega(n) e^{- i k f(n)} 
$$
$$
=~ \omega(0) + 
\sum_{m \in \mathbb{Z}} \sum_{n \geq 1} \,
\left(
c_m (k) \, (- \eta_{n-1})^{m} \, \omega(n)
e^{ - i \pi 
\bigl(
n -\frac{a-b}{a+b}
\bigr)
k \frac{(a+b)}{2 \pi}
}
\right)$$
$$
\mbox{} \hspace{2cm}
+ \sum_{m \in \mathbb{Z}} \sum_{n \geq 1} \,
\left(
c_m (-k) \, (- \eta_{n-1})^{m} \, \omega(-n)
e^{ + i \pi 
\bigl(
n -\frac{a-b}{a+b}
\bigr)
k \frac{(a+b)}{2 \pi}
}
\right).
$$
Let us observe that $(- \eta_{n-1})^{m} = 1$ if $m$ is even and that
$(- \eta_{n-1})^{m} = - \eta_{n-1}$ if $m$ is odd, for all $n \geq 1$.
Then
$$\widehat{\mu}(k) ~=~
\omega(0) + \sum_{m \in \mathbb{Z}}
c_m (k) 
\left[
\frac{1 + (-1)^m}{2} 
\left(
\sum_{n \geq 1} \, \omega(n)
e^{ - i \pi
\bigl(
n -\frac{a-b}{a+b}
\bigr)
k \frac{(a+b)}{2 \pi}
}
\right)
\right.
\hspace{2cm} \mbox{}$$
$$\mbox{} \hspace{4cm} \left. -
\frac{1 - (-1)^m}{2}
\left(
\sum_{n \geq 1} \, \omega(n) \eta_{n-1}
e^{ - i \pi
\bigl(
n -\frac{a-b}{a+b}
\bigr)
k \frac{(a+b)}{2 \pi}
}
\right)
\right]$$
\begin{equation}
\label{basedecomp}
+ \sum_{m \in \mathbb{Z}}
c_m (-k)
\left[
\frac{1 + (-1)^m}{2}
\left(
\sum_{n \geq 1} \, \omega(-n)
e^{ + i \pi
\bigl(
n -\frac{a-b}{a+b}
\bigr)
k \frac{(a+b)}{2 \pi}
}
\right)
\right.
\hspace{2cm} \mbox{}$$
$$\mbox{} \hspace{4cm}
\left. -
\frac{1 - (-1)^m}{2}
\left(
\sum_{n \geq 1} \, \omega(-n) \eta_{n-1}
e^{ + i \pi
\bigl(
n -\frac{a-b}{a+b}
\bigr)
k \frac{(a+b)}{2 \pi}
}
\right)
\right].  
\end{equation}

Suppose 
\begin{equation}
\label{rapportrationnel}
(a-b)/(a+b) \in \mathbb{Q} 
\qquad (\Longleftrightarrow ~~\frac{a}{b} \in \mathbb{Q})
\end{equation}
and denote 
$$g_{a,b} := \gcd(a-b,a+b) \geq 1.$$

\begin{lemma}
\label{gab}
If $a, b \in \mathbb{N} \setminus \{0\}$
are such that $a/b \in \mathbb{Q}$ is an irreducible fraction, then
$g_{a,b} = 2$ if $a$ and $b$ are both odd, and 
$g_{a,b} = 1$ if $a$ is odd and $b$ is even, or 
if $a$ is even and $b$ is odd. 
\end{lemma}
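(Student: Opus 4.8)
The plan is to analyze $g_{a,b} = \gcd(a-b, a+b)$ directly in terms of the parities of $a$ and $b$, using the hypothesis that $a/b$ is an irreducible fraction, i.e. $\gcd(a,b)=1$. First I would record the elementary identity relating the gcd of a sum and difference to the gcd of the original pair. Since any common divisor of $a-b$ and $a+b$ divides both their sum $(a-b)+(a+b)=2a$ and their difference $(a+b)-(a-b)=2b$, we have $g_{a,b} \mid \gcd(2a,2b) = 2\gcd(a,b) = 2$, where the last equality uses irreducibility. Hence $g_{a,b} \in \{1,2\}$, and the whole problem reduces to deciding when the value $2$ is actually attained.

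The second step is the parity dichotomy. The quantity $g_{a,b}$ equals $2$ precisely when both $a-b$ and $a+b$ are even, and equals $1$ otherwise. Now $a-b$ and $a+b$ always have the same parity (they differ by $2b$), so it suffices to examine $a+b$. If $a$ and $b$ are both odd, then $a+b$ is even, so $g_{a,b}=2$; note that the case of both even is excluded by $\gcd(a,b)=1$. If exactly one of $a,b$ is odd and the other even (the two remaining admissible cases under irreducibility), then $a+b$ is odd, forcing $g_{a,b}=1$. This exhausts all parity configurations compatible with $\gcd(a,b)=1$ and yields exactly the stated trichotomy.

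I do not expect any genuine obstacle here: the statement is a routine number-theoretic fact, and the only point requiring a moment's care is invoking $\gcd(a,b)=1$ in two places, namely to pin down $g_{a,b} \le 2$ via $\gcd(2a,2b)=2$, and to rule out the both-even case so that the three listed parity patterns are genuinely the only possibilities. The argument is short enough that it can be presented essentially as the verification of these two elementary claims (the divisibility bound and the parity check on $a+b$), after which the conclusion follows by inspection of the finitely many parity cases.
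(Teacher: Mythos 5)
Your proof is correct; the paper itself dismisses this lemma with the single word ``Obvious,'' and your argument (bounding $g_{a,b}$ by $\gcd(2a,2b)=2\gcd(a,b)=2$ and then settling the parity of $a+b$) is exactly the standard verification the authors had in mind. No issues.
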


\begin{proof}
Obvious.
\end{proof}

\begin{remark}
\label{rem1}
The hypothesis \eqref{rapportrationnel} is fundamental since 
$$
x ~\to~ \sum_{n \geq 1} \left(
\omega(\pm n) e^{\mp 2 i \pi (n - \frac{a-b}{a+b}) x \frac{a+b}{g_{a,b}}}\right)$$
and
$$
x ~\to~ \sum_{n \geq 1} \left(
\omega(\pm n) \eta_{n-1} e^{\mp 2 i \pi (n - \frac{a-b}{a+b}) x \frac{a+b}{g_{a,b}}} \right) 
$$
are then periodic of period $1$
and that they can be considered, 
under some assumptions (see below),  
as the derivative in the distributional sense
of a function of period 1. 
The existence of these four new functions of period $1$
would be 
impossible if \eqref{rapportrationnel} 
would not hold. 
Then if $(a-b)/(a+b) \not\in \mathbb{Q}$ 
the decomposition into a discrete part and 
a continuous part of the Fourier transform, 
which is deduced from them (see below), 
would be impossible except if 
$\widehat{\mu}(k)$ is equal to $0$.
In other terms we can say that
the condition
$(a-b)/(a+b) \not\in \mathbb{Q}$
makes inexistant
the diffraction phenomenon by the 
weighted Dirac comb \eqref{radonmu}, 
supported by the
Thue-Morse quasicrystal $\Lambda_{a,b}$; this one 
does not occur
whatever the weight function $\omega(x)$
is (no concentration of intensity at any wave vector $k$). 
\end{remark}

\begin{remark}
If $a = b > 0$ and $\omega(n) = 1$ for $n \geq 0$ and 
$\omega(n) = 0$ for $n \leq -1$, then  
the tempered distribution 
$\widehat{\mu}(k)$
is an infinite sum and is reminiscent of 
the Fourier transform \eqref{TFTM} of the
$\pm 1$ Prouet-Thue-Morse sequence in the usual way.
Let us observe that it is the sections 
of this tempered distribution
(finite sums obtained by truncating its ``tail") which
provide the Riesz products \eqref{rieszTM} 
when particular averaging sequences of length
a power of $2$ are used. We will  
approach this dependance to the choice of the
averaging sequence by the rarefaction phenomenon
(Section \ref{S4}). 
\end{remark}

Let us assume \eqref{rapportrationnel}, 
i.e. $a/b \in \mathbb{Q}$.
Then we make the following hypothesis:
\paragraph{Hypothesis  {\bf (H)}}

we suppose that
the Fourier series
$$
\sum_{n \geq 1} 
\Bigl(
\frac{\omega(n) \, e^{- 2 i \pi (n - \frac{a-b}{a+b}) x \frac{a+b}{g_{a,b}}}}
{\frac{-i \pi (a+b)}{g_{a,b}} 
\bigl(n - \frac{a-b}{a+b}\bigr)}  
\Bigr),  
\quad
\mbox{resp.} 
~\sum_{n \geq 1}
\Bigl(
\frac{\omega(-n) \, e^{+ 2 i \pi (n - \frac{a-b}{a+b}) x \frac{a+b}{g_{a,b}}}}
{\frac{+ i \pi (a+b)}{g_{a,b}}
\bigl(n - \frac{a-b}{a+b}\bigr)}  
\Bigr), 
$$
resp.  
$$
\sum_{n \geq 1}
\Bigl(
\frac{\omega(n) \eta_{n-1}\, e^{- 2 i \pi (n - \frac{a-b}{a+b}) x \frac{a+b}{g_{a,b}}}}
{\frac{-i \pi (a+b)}{g_{a,b}}
\bigl(n - \frac{a-b}{a+b}\bigr)}
\Bigr), 
\quad
\mbox{resp.}  
~\sum_{n \geq 1}
\Bigl(
\frac{\omega(-n) \eta_{n-1} \, e^{+ 2 i \pi (n - \frac{a-b}{a+b}) x \frac{a+b}{g_{a,b}}}}
{\frac{+ i \pi (a+b)}{g_{a,b}}
\bigl(n - \frac{a-b}{a+b}\bigr)}
\Bigr),
$$
converges in the punctual sense to a periodic piecewise continuous function
$$f_{\omega +}(x), ~\mbox{resp.} ~f_{\omega -}(x), ~f_{\omega \eta +}(x), ~f_{\omega \eta -}(x),$$
of period $1$, which 
has a  derivative 
$$f'_{\omega +}(x), ~\mbox{resp.} ~f'_{\omega -}(x), ~f'_{\omega \eta +}(x), ~f'_{\omega \eta -}(x),$$
continuous bounded on the open set
$$\mathbb{R} -\cup_{p} \{a_{p,+}\}, ~\mbox{resp.}  
~\mathbb{R} -\cup_{p} \{a_{p,-}\}, ~\mathbb{R} -\cup_{p} \{a_{p,\eta +}\}, 
~\mathbb{R} -\cup_{p} \{a_{p,\eta -}\},$$ 
where  
$$a_{p,+}, \mbox{resp.} ~a_{p,-}, ~a_{p,\eta +}, ~a_{p,\eta -},$$ 
are the respective discontinuity points of
$$f_{\omega +}(x), \mbox{resp.} f_{\omega -}(x), ~f_{\omega \eta +}(x), ~f_{\omega \eta -}(x).$$ 
Let 
$$\sigma_{p +} = f_{\omega +}(a_{p,+} + 0) -f_{\omega +}(a_{p,+} - 0),$$ 
$$\sigma_{p -} = f_{\omega -}(a_{p,-} + 0) -f_{\omega -}(a_{p,-} - 0),$$ 
$$\sigma_{p \eta +} = f_{\omega \eta +}(a_{p,\eta +} + 0) -f_{\omega \eta +}(a_{p,\eta +} - 0),$$
$$\sigma_{p \eta -} = f_{\omega \eta -}(a_{p,\eta -} + 0) -f_{\omega \eta -}(a_{p,\eta -} - 0),$$
be the respective jumps at $a_{p,+}, \mbox{resp.} ~a_{p,-}, ~a_{p,\eta +}, ~a_{p,\eta -},$ of 
$$f_{\omega +}(x), ~\mbox{resp.} ~f_{\omega -}(x), ~f_{\omega \eta +}(x), ~f_{\omega \eta -}(x).$$ 
Then, by a classical result (\cite{schwartz}, Chap. II, \S 2), we have the equality 
$$\sum_{n \geq 1} \Bigl(
\omega(n) e^{- 2 i \pi (n - \frac{a-b}{a+b}) \Bigl( \frac{g_{a,b}}{4 \pi} k\Bigr)
 \frac{a+b}{g_{a,b}}}
\, \Bigr) \hspace{6cm} \mbox{}$$ 
\begin{equation}
\label{schwartzum}
\mbox{} \hspace{2cm} = f'_{\omega +}\Bigl( \frac{g_{a,b}}{4 \pi} k \Bigr)  
+
\sum_{p \in \mathbb{Z}} \sigma_{p +} 
\delta
\Bigl(
\frac{g_{a,b}}{4 \pi} k - a_{p, +} \Bigr)
\end{equation}
where $f'_{\omega +}$ has to be taken in a distributional sense.
We have similar equalities for the three other summations. 
Then we are led to the following 
(formal) formula for the Fourier
transform of $\mu$: 
$$\widehat{\mu}(k) ~=~
\omega(0) + \sum_{m \in \mathbb{Z}}
c_m (k)
\left[
\frac{1 + (-1)^m}{2}
\left(
f'_{\omega +}\Bigl( \frac{g_{a,b}}{4 \pi} k \Bigr) +
\sum_{p \in \mathbb{Z}} \sigma_{p +}
\delta
\Bigl(
\frac{g_{a,b}}{4 \pi} k - a_{p, +} \Bigr)
\right)
\right.
\hspace{2cm} \mbox{}$$
$$\mbox{} \hspace{1cm} \left. -
\frac{1 - (-1)^m}{2}
\left(
f'_{\omega \eta +}\Bigl( \frac{g_{a,b}}{4 \pi} k \Bigr) +
\sum_{p \in \mathbb{Z}} \sigma_{p \eta +}
\delta
\Bigl(
\frac{g_{a,b}}{4 \pi} k - a_{p, \eta+} \Bigr)
\right)
\right]$$
\begin{equation}
\label{mukfourierseries}
+ \sum_{m \in \mathbb{Z}}
c_m (-k)
\left[
\frac{1 + (-1)^m}{2}
\left(
f'_{\omega -}\Bigl( \frac{g_{a,b}}{4 \pi} k \Bigr) +
\sum_{p \in \mathbb{Z}} \sigma_{p -}
\delta
\Bigl(
\frac{g_{a,b}}{4 \pi} k - a_{p, -} \Bigr)
\right)
\right.
\hspace{2cm} \mbox{}$$
$$\mbox{} \hspace{1cm}
\left. -
\frac{1 - (-1)^m}{2}
\left(
f'_{\omega \eta -}\Bigl( \frac{g_{a,b}}{4 \pi} k \Bigr) +
\sum_{p \in \mathbb{Z}} \sigma_{p \eta -}
\delta
\Bigl(
\frac{g_{a,b}}{4 \pi} k - a_{p, \eta-} \Bigr)
\right)
\right]
\end{equation}
within the framework of the distribution theory.

We now consider the convergence of the part of $\widehat{\mu}(k)$
which does not contain the
Dirac measures (with the jumps). 

Given $T > 0$ and the set of Fourier coefficients
$\{c_{m}(k) \mid m \in \mathbb{Z}\}$ in
\eqref{cmk}, let
$J_T$ be the space of 4-tuples
$\{g_1(k), g_2(k), g_3(k), g_4(k)\}$,
where each $g_i(k)$
is a complex valued function,
such that the series
$$\sum_{m \in \mathbb{Z}}
\left(
c_m (k)
\Bigl[
\frac{1 + (-1)^m}{2}
\left(g_1(k/T)
\right)
-
\frac{1 - (-1)^m}{2}
\left(
g_2(k/T)
\right)\Bigr]
\right.$$
\begin{equation}
\label{jtseries}
\left.
+
c_m (-k)
\Bigl[
\frac{1 + (-1)^m}{2}
\left(
g_3(k/T)
\right)
-
\frac{1 - (-1)^m}{2}
\left(
g_4(k/T)
\right)\Bigr]
\right)
\end{equation}
converges (in the punctual sense)
to a function $G(k)$ which is slowly increasing
and locally integrable in $k$: there exists $A > 0$ and $\nu > 0$
such that $|G(k)| < A |k|^{\nu}$ for
$k \to +\infty$.
As a matter of fact, for any $T > 0$, all 4-tuples of
Fourier exponentials
$e^{i \lambda k}$ are in $J_T$ for all $\lambda \in \mathbb{R}$,
by \eqref{devpp1}.

If we assume
that
$\{f'_{\omega +}, f'_{\omega -}, f'_{\omega \eta +}, f'_{\omega \eta -}\}
\in J_T$ with
$T=4 \pi/g_{a,b}$
then, assuming true the hypothesis {\bf (H)}, 
the first part of $\widehat{\mu}(k)$ 
which does not contain the Dirac measures 
defines a regular tempered distribution, say $\widehat{\mu}_r(k)$.
From \eqref{mukfourierseries}, 
we infer that if the Fourier
transform
of the measure $\mu$ can be interpreted as a measure too, then the first term
$\widehat{\mu}_r(k)$ may be interpreted as a 
``continuous part" of that measure.  

Summarizing we have proved the following result.

\begin{theorem}
\label{fourierTMtheo}
Assume $a/b \in \mathbb{Q}$ and 
let $g_{a,b} = ${\rm gcd}$(a-b,a+b)$.
Suppose that the hypothesis {\bf ({\rm H)}} 
is true and that 
$\{f'_{\omega +}, f'_{\omega -}, f'_{\omega \eta +}, f'_{\omega \eta -}\}$ 
belongs to $J_T$ with
$T=4 \pi/g_{a,b}$.  
Then \begin{itemize}
\item[(i)]
the Fourier transfom $\widehat{\mu}$ of $\mu$ 
is the sum of a pure point 
tempered distribution $\widehat{\mu}_{pp}$ 
and a regular tempered distribution
$\widehat{\mu}_r$
as follows:
$$\widehat{\mu} ~=~
\widehat{\mu}_{pp} + \widehat{\mu}_r$$
with
$\widehat{\mu}_r (k) ~=~ 
\omega(0)$
$$ +
\sum_{m \in \mathbb{Z}}
\left(
c_m (k)
\Bigl[
\frac{1 + (-1)^m}{2}
\left(f'_{\omega +}(k/T)
\right)
-
\frac{1 - (-1)^m}{2}
\left(
f'_{\omega \eta +}(k/T)
\right)\Bigr]
\right.$$
\begin{equation}
\label{mur}
\left.
+
c_m (-k)
\Bigl[
\frac{1 + (-1)^m}{2}
\left(
f'_{\omega \eta +}(k/T)
\right)
-
\frac{1 - (-1)^m}{2}
\left(
f'_{\omega \eta -}(k/T)
\right)\Bigr]
\right),  
\end{equation}
and with
$\widehat{\mu}_{pp}(k) ~=~$  
$$\sum_{m, p \in \mathbb{Z}}
\left(
c_m (k)
\Bigl[
\frac{1 + (-1)^m}{2}
\sigma_{p +}
\delta
\Bigl(
\frac{k}{T} - a_{p, +} \Bigr)
-
\frac{1 - (-1)^m}{2}
\sigma_{p -}
\delta
\Bigl(
\frac{k}{T} - a_{p, -} \Bigr)
\Bigr]
\right.$$
\begin{equation}
\label{mupp}
\left.
+
c_m (-k)
\Bigl[
\frac{1 + (-1)^m}{2}
\sigma_{p \eta +}
\delta
\Bigl(
\frac{k}{T} - a_{p, \eta +} \Bigr)
-
\frac{1 - (-1)^m}{2}
\sigma_{p \eta -}
\delta
\Bigl(
\frac{k}{T} - a_{p, \eta -} \Bigr)
\Bigr]
\right),  
\end{equation}

\item[(ii)] the pure point part $\widehat{\mu}_{pp}$ is 
supported by the union
$$T \times \bigcup_{p \in \mathbb{Z}} \, \{a_{p,+}, a_{p,-}, a_{p,\eta +}, a_{p,\eta -}\}.$$   
\end{itemize}
\end{theorem}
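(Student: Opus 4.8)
The statement merely summarises the computation carried out above, so the plan is to organise that computation into a single clean deduction and to make explicit where each of the two standing hypotheses enters. Since $\mu$ is translation bounded it is a tempered distribution, and hence $\widehat{\mu}(k) = \sum_{n \in \mathbb{Z}} \omega(n)\, e^{-ikf(n)}$ is a well-defined element of $\mathcal{S}'$; this is the object to be decomposed. First I would split the sum according to the sign of $n$, the term $n = 0$ contributing the constant $\omega(0)$, and on each of the two half-sums substitute the affine-plus-fractional expression $f(n) = \alpha_1 n + \alpha_0 + \alpha_2\{x(n)\}$ furnished by Lemma \ref{lem1}. Inserting the Fourier expansion \eqref{devpp1} of the periodic factor $e^{-ik\alpha_2\{x\}}$, with the coefficients $c_m(k)$ of \eqref{cmk}, turns each half-sum into a double series in $m$ and $n$.

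Next I would interchange the two summations and use Lemma \ref{lem2} to recast the phase factor $e^{2i\pi m(1+\eta_{n-1})/4}$ as $(-\eta_{n-1})^m$; projecting onto the parity of $m$ by means of the idempotents $(1 \pm (-1)^m)/2$ then separates the sums weighted by $\omega(n)$ from those weighted by $\omega(n)\eta_{n-1}$. After the change of variable $x = \tfrac{g_{a,b}}{4\pi} k$, the four resulting inner $n$-series are exactly the Fourier series of hypothesis (H). Invoking (H), each converges pointwise to one of the periodic piecewise continuous functions $f_{\omega+}, f_{\omega-}, f_{\omega\eta+}, f_{\omega\eta-}$, and the classical result on distributional derivatives of such functions (\cite{schwartz}, Chap.~II, \S~2) expresses the derivative of each as its pointwise derivative $f'_{\omega\pm}$, $f'_{\omega\eta\pm}$ plus a Dirac comb carrying the jumps $\sigma$ at the discontinuity points $a$; this is \eqref{schwartzum}. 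Assembling the four pieces produces the formal identity \eqref{mukfourierseries}.

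It then remains to separate this identity into its regular and pure point parts and to verify that each is a genuine tempered distribution. For the continuous part I would invoke the hypothesis $\{f'_{\omega+}, f'_{\omega-}, f'_{\omega\eta+}, f'_{\omega\eta-}\} \in J_T$ with $T = 4\pi/g_{a,b}$: by the defining property of $J_T$ the remaining series \eqref{jtseries} in $m$ converges pointwise to a slowly increasing and locally integrable function, hence to a regular tempered distribution $\widehat{\mu}_r$, namely \eqref{mur}. The surviving Dirac terms collect into $\widehat{\mu}_{pp}$ as written in \eqref{mupp}. Part (ii) is then immediate: each Dirac mass in \eqref{mupp} is located where $k/T$ equals one of the $a_{p,+}, a_{p,-}, a_{p,\eta+}, a_{p,\eta-}$, so the support of $\widehat{\mu}_{pp}$ lies in $T \times \bigcup_{p \in \mathbb{Z}} \{a_{p,+}, a_{p,-}, a_{p,\eta+}, a_{p,\eta-}\}$.

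The main obstacle is analytic rather than combinatorial. Because $c_m(k)$ decays only like $1/m$, the series in $m$ is at best conditionally convergent, so neither the interchange of the $m$- and $n$-summations nor the identification of the limit with a bona fide element of $\mathcal{S}'$ is automatic. This is precisely the role of the two hypotheses: (H) guarantees that the $n$-series sum to piecewise continuous functions to which the Schwartz differentiation lemma applies, while membership in $J_T$ guarantees that the residual $m$-series defines a regular tempered distribution. The genuinely substantive work, which the theorem deliberately places outside its scope, is to exhibit weight functions $\omega$ for which these hypotheses actually hold; once they are granted, the proof is a matter of careful bookkeeping.
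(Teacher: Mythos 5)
Your proposal is correct and follows essentially the same route as the paper, whose proof of Theorem \ref{fourierTMtheo} is precisely the computation preceding its statement (splitting by the sign of $n$, Lemmas \ref{lem1} and \ref{lem2}, the Fourier expansion \eqref{devpp1}, the Schwartz jump formula \eqref{schwartzum} under hypothesis {\bf (H)}, and membership in $J_T$ to identify the regular part). Your closing remark correctly isolates where the two standing hypotheses do the analytic work.
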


\subsection{Scaling behaviour of approximant measures}
\label{S3.2}

It has becoming traditional for the last two decades
to define the Bragg spectrum, i.e.
the more or less bright spots which are observed 
in a diffraction experiment
on a long-range order material (say defined by
$\chi$, as a weighted 
sum of Dirac measures localized at the atomic sites), 
as the pure-point 
component of the diffraction measure, defined by the Fourier
transform of the so-called autocorrelation
$\gamma$ of the measure $\chi$
(for the definition of $\gamma$ in the context of
aperiodic point sets, see \cite{hof}).

This calls for some assumptions:  
on the point set of diffractive sites
(whether it is a model set, a Meyer set or a 
Delone set with additional properties, etc), 
on the uniqueness of 
the autocorrelation $\gamma$ (Theorem 3.4 in \cite{hof}), 
on the existence of a limit
in the sense of Bohr-Besicovich for the averaged 
Fourier transform of finite approximants
of the measure $\chi$ (Theorems 5.1 and 5.3 in
\cite{hof}). Under these assumptions, the
Bragg component of the spectrum can be computed using
the so-called Bombieri-Taylor argument. 

Let us make precise this context in the case
of a weighted Dirac comb 
$\mu = \sum_{n \in \mathbb{Z}} \, \omega(n) \delta_{f(n)}$
on $\Lambda_{a,b}$ as \eqref{radonmu}.
In the general context of diffraction physics, 
the diffracted
intensity at $k$, per diffracting site, 
(with $a/b \in \mathbb{Q}$; see Remark \ref{rem1})
is \cite{cowley} \cite{guinier} \cite{hof} \cite{lagarias2}:
\begin{equation}
\label{intensityI}
I_{\omega}(k) ~=~ \limsup_{l \to +\infty}
\left|
\frac{1}{\mbox{card}(U_l)} \, \sum_{f(n) \in U_l} \omega(n) e^{-i k f(n)}
\right|^2 \, , 
\end{equation} 
for any averaging sequence
$(U_l)_{l \geq 0} \in \mathcal{A}_{a,b}$.

The Bombieri-Taylor argument asserts that when
\eqref{intensityI} takes
a nonzero value, for an averaging sequence
$(U_l)_{l \geq 0}$, then we have the equality
$$I_{\omega}(k) ~=~ \widehat{\gamma}_{pp}(\{k\})$$
where $\widehat{\gamma}_{pp}(\{k\})$ is 
the value  at $\{k\}$ 
of the pure-point part of the Fourier transform $\widehat{\gamma}$
of the autocorrelation $\gamma$.

The
Bombieri-Taylor argument has appeared
in \cite{bombieritaylor1} and \cite{bombieritaylor2}
without proof. Since then it has become a conjecture
(see Hof \cite{hof}) since its proof 
seems fairly difficult to establish in general
for Delone sets or Meyer sets \cite{hof}.
It is only proved for model sets
(Hof \cite{hof}), for some Meyer sets (Strungaru \cite{strungaru}), 
and in particular contexts  
(de Oliveira \cite{deoliveira}, Lenz \cite{lenz}, $\ldots$).
We do  not intend to prove it
but to reformulate it in terms of the scaling
properties of a family of approximant measures 
that we define below in a way which seems consistent with
approaches followed by many authors (Cheng, Savit and Merlin \cite{chengsavitmerlin}, 
Aubry, Godr\`eche and Luck \cite{aubrygodrecheluck} \cite{luck},
Kolar, Iochum and Raymond \cite{kolariochumraymond}).
These measures will be ``approximant" to the Fourier
transform of the autocorrelation $\gamma$ but we need not to have a precise
definition of $\gamma$, or to know whether $\gamma$ is 
unique and well-defined, etc.  
This is a simplification of the problem.
This reformulation presents the advantage to be  employed in  expressing
the other
argument, that we will call (AGL) Aubry-Godr\`eche-Luck argument. The AGL argument 
 is known to allow the computation of 
the singular continous component of the spectrum.

%Notations: diam$(A)$ denotes the diameter of the subset $A \subset \mathbb{R}$.

The following definition is natural, see \eqref{bellesuite}.

\begin{definition}
\label{defapproxlnu}
Given $\mu = \sum_{n \in \mathbb{Z}} \, \omega(n) \delta_{f(n)}$
on $\Lambda_{a,b}$ and an averaging sequence
$\mathcal{U} = (U_l)_{l \geq 0} \in \mathcal{A}_{a,b}^{{\scriptsize supp}(\omega)}$
of finite
approximants of $\Lambda_{a,b}^{{\scriptsize supp}(\omega)}$, we define
the $l$-th approximant measure $\nu_{\mathcal{U},l}(k) dk$ 
as
\begin{equation}
\label{approxlnu}
\nu_{\mathcal{U},l}(k) dk ~:=~ \frac{1}
{ \mbox{card}(
U_{l}
) } 
\, 
\left| \sum_{f(n) \in U_l} \omega(n) e^{-i k f(n)}
\right|^2 \, dk  
\end{equation}
\end{definition}

\begin{definition}
\label{argumentsBT+AGL}
Let $k \in \mathbb{R}$.
Then

(i)
{\bf (BT) Bombieri-Taylor argument}: 
$k$ belongs to the 
set of Bragg peaks of the spectrum 
of $\Lambda_{a,b}$ if
and only if
\begin{equation}
\label{braggBT}
0 < \liminf_{l \to +\infty} \frac{\nu_{\mathcal{U},l}(k) dk}{\mbox{card}(
U_{l}
) dk}
\leq 
\limsup_{l \to +\infty} \frac{\nu_{\mathcal{U},l}(k) dk}{\mbox{card}(
U_{l}
) dk}
< +\infty.  
\end{equation}
for at least one sequence of finite approximants 
$\mathcal{U}  = (U_l)_{l \geq 0} \in \mathcal{A}_{a,b}^{{\scriptsize supp}(\omega)}$,  

(ii)
{\bf (AGL) Aubry-Godr\`eche-Luck argument}:
$k$ belongs to the
singular continuous part of the spectrum of
$\Lambda_{a,b}$ if
and only if
there exists $\alpha \in (-1, 1)$, which depends only upon $k$
(not upon $\mathcal{U}$),
such that
\begin{equation}
\label{sing1AGL}
0 < \liminf_{l \to +\infty} \frac{\nu_{\mathcal{U},l}(k) dk}{\mbox{card}(
U_{l} 
)^{\alpha} dk}
\leq
\limsup_{l \to +\infty} \frac{\nu_{\mathcal{U},l}(k) dk}{\mbox{card}(
U_{l} 
)^{\alpha} dk}
< +\infty
\end{equation}
for all $\mathcal{U} ~=~ (U_l)_{l \geq 0} \in \mathcal{A}_{a,b}^{{\scriptsize supp}(\omega)}$  
such that 
$$0 \neq \liminf_{l \to +\infty} \frac{\nu_{\mathcal{U},l}(k) dk}{\mbox{card}(
U_{l}
)^{\alpha} dk},$$
or such that
\begin{equation}
\label{sing2AGL}
0 < \limsup_{l \to +\infty} \frac{\nu_{\mathcal{U},l}(k) dk}{\mbox{card}(
U_{l} 
)^{\alpha} dk}
< +\infty
\end{equation}
for all $\mathcal{U} ~=~ (U_l)_{l \geq 0} \in \mathcal{A}_{a,b}^{{\scriptsize supp}(\omega)}$
such that
$$0 ~=~ \liminf_{l \to +\infty} \frac{\nu_{\mathcal{U},l}(k) dk}{\mbox{card}(
U_{l} 
)^{\alpha} dk}.$$
\end{definition}
The fact that the exponent $\alpha$ is $< 1$ in the (AGL)-argument, 
compared to the Bombieri-Taylor Conjecture (BT),   
means that the concentration of intensity in the diffraction process
is ``less infinite"
along the singular continuous component 
than in the case of the Bragg peaks.
 
\subsection{The Bragg component}
\label{S3.3}

In the rest of the paper, we assume $a/b \in \mathbb{Q}$ so that
the diffraction phenomenon from any weighted Dirac comb on
$\Lambda_{a,b}$ can occur, by Remark \ref{rem1}. 
Then without restriction of generality
we can assume $a, b \in \mathbb{Q}$
(with still $0 < b < a$). 

Let us start by the Thue-Morse quasicrystal defined by
$$\omega(n) = 0 \quad \mbox{if}~ n \leq 0 ~\mbox{and}~
\omega(n) = 1 \quad \mbox{if}~ n \geq 1,$$
up till Section \ref{S5}, 
and report on other weighted Dirac combs
on $\Lambda_{a,b}$ in Section \ref{S6}.
Hence supp$(\omega) = \mathbb{N} \setminus \{0\}$.
Let 
\begin{equation}
\label{canon}
U_l := [0, f(l)] \cap 
\bigl(\Lambda_{a,b} \setminus \{0\}
\bigr)
\qquad \quad \mbox{for}~  
l \geq 1,
\end{equation}
so that $(U_l)_{l \geq 1} \in \mathcal{A}_{a,b}^{\mbox{{\scriptsize supp}}(\omega)}$ and 
$\mbox{card}(U_l) = l$. 
We call this sequence the canonical sequence, any other
sequence of finite approximants
in 
$\mathcal{A}_{a,b}^{\mbox{{\scriptsize supp}}(\omega)}$
being a subsequence of it.

Let $k \in \mathbb{R}$ and
$$
\kappa(k) := \sum_{m \in 2 \mathbb{Z}} \, c_{m}(k) \qquad {\rm and}\quad  
\kappa_{\eta}(k) := \sum_{m \in 2 \mathbb{Z}+1} \, c_{m}(k).$$
By \eqref{basedecomp}, we have
\begin{equation}
\label{basedecomp1}
\widehat{\mu}(k) = \kappa(k) \bigl(
\sum_{n \geq 1}
\, e^{-  \frac{i}{2} ((a+b)n - (a-b)) k
} \bigr)
-
\kappa_{\eta}(k)
\bigl(
\sum_{n \geq 1}
\, \eta_{n-1} e^{-  \frac{i}{2} ((a+b)n - (a-b)) k
}
\bigr).
\end{equation}

\begin{theorem}
\label{braggqc}
The Bragg component, denoted $\mathcal{B}_{a,b}^{(\mu)}$, 
of the spectrum of
the Thue-Morse quasicrystal given
by $\mu = \sum_{n \geq 1} \delta_{f(n)}$
on $\Lambda_{a,b}$ is exactly
the periodized normalized group of $2$-adic rational
numbers
\begin{equation}
\label{braggqcD2}
\frac{4 \pi}{a+b} \mathbb{Z} + \frac{2}{a+b} \mathbb{D}_2 ~=~ 
\left\{
\frac{4 \pi}{a+b} \bigl(r + \frac{m}{2^h}\bigr) 
\mid r \in \mathbb{Z} ~\mbox{ and}~ 0 \leq m \leq 2^h , h = 0, 1, 2, \ldots 
\right\}. 
\end{equation} 
\end{theorem}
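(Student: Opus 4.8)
The plan is to verify the Bombieri--Taylor criterion \eqref{braggBT} of Definition \ref{argumentsBT+AGL}(i) directly on the canonical averaging sequence \eqref{canon}, where $\mathrm{card}(U_l)=l$ and
$$\frac{\nu_{\mathcal U,l}(k)}{\mathrm{card}(U_l)}=\frac1{l^{2}}\Bigl|\sum_{n=1}^{l}e^{-ikf(n)}\Bigr|^{2},$$
so that $k$ is a Bragg peak exactly when this ratio has a finite and strictly positive $\liminf$ along some subsequence. First I would feed in the parity formula of Lemma \ref{lem1}: pairing $n=2j-1$ with $n=2j$ and using $\eta_{2j-2}=\eta_{j-1}$, the partial sum over $U_{2L}$ splits as a bounded trigonometric factor times a geometric sum $\sum_{j=1}^{L}e^{-i(a+b)kj}$ plus another bounded factor times a Thue--Morse exponential sum $\sum_{j=1}^{L}\eta_{j-1}e^{-i(a+b)kj}$. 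This is the elementary counterpart of the distributional splitting already recorded in \eqref{basedecomp1} through $\kappa(k)$ and $\kappa_\eta(k)$, and it reduces the determination of the Bragg set to the scaling behaviour of these two sums, the prefactors only removing a measure-zero set of $k$.

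Second, the geometric sum grows linearly exactly at the reciprocal of the average lattice: the one-sided series $\sum_{n\ge1}e^{-\tfrac i2((a+b)n-(a-b))k}$ of \eqref{basedecomp1} is, as a tempered distribution in $k$, a Dirac comb supported on $\tfrac{4\pi}{a+b}\mathbb Z$, which is the pure-point content of the $\kappa$-term isolated in Theorem \ref{fourierTMtheo}; off this lattice it stays bounded and contributes nothing. This already produces the subgroup $\tfrac{4\pi}{a+b}\mathbb Z$ of \eqref{braggqcD2}.

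Third, the denser $2$-adic peaks must come from the Thue--Morse exponential sum, whose modulus squared over a block of length $2^{n}$ is precisely the Riesz product \eqref{rieszTM}. Here I would invoke the two classical inputs quoted in the Introduction: Peyri\`ere's study \cite{peyriere} of \eqref{rieszTM} and of the limit measure \eqref{bellesuite}, and Queff\'elec's theorem \cite{queffelec2} that replacing $\{\pm1\}$ by $\{0,1\}$ --- which is the arithmetic substance of the constant weight $\omega\equiv1$, entering through $s(n)\bmod2$ as in Lemma \ref{lem2} --- creates a pure-point component localized exactly on the group $\mathbb D_2$ of \eqref{DD2}. Transporting $\mathbb D_2$ back to $k$-space through the scaling $T=4\pi/g_{a,b}$ used throughout Subsection \ref{S3.1} turns the torus locations $\mathbb D_2$ into $\tfrac{2}{a+b}\mathbb D_2$; superposing this with $\tfrac{4\pi}{a+b}\mathbb Z$ gives exactly the periodized normalized group \eqref{braggqcD2}, and the Bombieri--Taylor argument certifies each such accumulation point as a genuine Bragg peak.

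The hard part, and the step on which the whole statement rests, is the converse inclusion: that no $k$ with non-$2$-adic normalized coordinate survives. This amounts to showing that the Thue--Morse sum has strictly sub-linear growth off the $2$-adic locus --- equivalently that the limit measure \eqref{bellesuite} carries no atom away from $\mathbb D_2$ --- which is exactly the singular-continuity theorem of Peyri\`ere and Queff\'elec, together with a check that the zero sets of $\kappa(k)$ and $\kappa_\eta(k)$ do not suppress any point of \eqref{braggqcD2}. Concretely I expect to run this selection along the renormalization $\sum_{j=0}^{2M-1}\eta_je^{-i\theta j}=(1-e^{-i\theta})\sum_{j=0}^{M-1}\eta_je^{-2i\theta j}$, tracking for an appropriately chosen averaging subsequence which wave vectors keep the full $\sim L$ scaling (the Bragg regime) and which fall into the sub-linear regime governing the singular continuous component of Subsection \ref{S3.4}.
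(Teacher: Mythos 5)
Your proposal follows essentially the same route as the paper's proof: both rest on the decomposition \eqref{basedecomp1} of $\widehat{\mu}$ into a lattice sum weighted by $\kappa+\kappa_{\eta}$ and a $\{0,1\}$-Thue--Morse sum weighted by $\kappa_{\eta}$, evaluate the approximant measures along blocks of length $2^{h}$, and invoke Queff\'elec's result that the $\{0,1\}$ alphabet produces a pure-point component exactly on $\mathbb{D}_2$, with the Bombieri--Taylor argument certifying the peaks. Your extra attention to the converse inclusion and to possible suppression by zeros of $\kappa$ and $\kappa_{\eta}$ is consistent with, and if anything slightly more careful than, the paper's closing remark about ``possible extinctions''.
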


\begin{proof}
First we write $\widehat{\mu}(k)$ in \eqref{basedecomp1}
as
$$(\kappa(k)+\kappa_{\eta}(k))
\bigl(
\sum_{n \geq 1}
\, e^{-  \frac{i}{2} ((a+b)n - (a-b)) k
} \bigr)
- 2
\kappa_{\eta}(k)
\bigl(
\sum_{n \geq 1}
\, \frac{1+\eta_{n-1}}{2} e^{-  \frac{i}{2} ((a+b)n - (a-b)) k
}
\bigr)
$$
and we take the
$l$-th approximant measures for $l = 2^h$ with 
$h = 0, 1, 2, \ldots$ for $k = \frac{4 \pi}{a+b} q$.
We have
$$\nu_{\mathcal{U},l}(k) dk ~=~ \frac{1}{2^h}  
\,
\left| 
\Bigl(\kappa\bigl(\frac{4 \pi}{a+b} q\bigr)+\kappa_{\eta}\bigl(\frac{4 \pi}{a+b} q\bigr)
\Bigr)
\Bigl(
\sum_{n= 1}^{2^h}
\, e^{-  2 i \pi (n - \frac{a-b}{a+b})q 
} \Bigr)
\right.
$$
\begin{equation}
\label{braggapprox}
\left.
- 2
\kappa_{\eta}\bigl(\frac{4 \pi}{a+b} q\bigr)
\Bigl(
\sum_{n= 1}^{2^h}
\, \frac{1+\eta_{n-1}}{2} e^{-  2 i \pi (n - \frac{a-b}{a+b})q 
}
\Bigr)
\right|^2 \, \frac{4 \pi}{a+b} dq.  
\end{equation}
We recognize the first summation as the Fourier transform of a lattice
and the second summation as the Fourier transform
of the Thue-Morse sequence on the alphabet $\{0, 1\}$ studied by
Queff\'elec \cite{queffelec2} \S 6.3.2.1 for which the spectrum is composed of
a Bragg component and a singular continuous component
(see \eqref{TFTM} to 
\eqref{DD2} in the Introduction). We now use the
(BT)-argument \eqref{braggBT} to these approximant measures, 
allowing $h$ to go to $+\infty$,  to
claim that the Bragg peaks arise from  
either the first summation or the second summation. 
The corresponding intensities can be computed
from 
$\kappa\bigl(\frac{4 \pi}{a+b} q\bigr)$
and
$\kappa_{\eta}\bigl(\frac{4 \pi}{a+b} q\bigr)
$, using \eqref{braggapprox},
passing to the limit $h \to +\infty$, 
and may take a zero value (possible extinctions).
\end{proof}

\subsection{Rarefied sums of the Thue-Morse sequence and singular continuous component of the spectrum}
\label{S3.4}

From the expression \eqref{braggapprox} of the approximant measure
with $l=2^h$, from \cite{queffelec2} \S 6.3.2.1 and the (AGL)-argument
\eqref{sing1AGL} \eqref{sing2AGL}, we infer that,  
apart from the Bragg component of the spectrum    
of
the Thue-Morse quasicrystal given
by $\mu = \sum_{n \geq 1} \delta_{f(n)}$
on $\Lambda_{a,b}$, 
the continuous component exists (i.e. is not trivial) 
and is only singular.

Assume that $k = (4 \pi q)/(a+b)$ does not belong to the Bragg component
\eqref{braggqcD2} and that $\mathcal{U}$ is the canonical sequence of finite approximants
\eqref{canon} in the following.
Then we see, when $k$ runs over the singular continuous part of the spectrum, 
using the (AGL)-argument, that the scaling behaviour 
of \eqref{braggapprox} with a power of $l = 2^h$ 
is
dictated and dominated by
the scaling behaviour of the second summation
$\sum_{n= 1}^{2^h}
\, \frac{1+\eta_{n-1}}{2} e^{-  2 i \pi (n - \frac{a-b}{a+b})q
}$ with a power of $2^h$. 

More generally, when $k$ lies 
in the singular continuous part of the spectrum, 
it is obvious to say that 
the scaling exponent $\alpha$
introduced in \eqref{sing1AGL} \eqref{sing2AGL},   
for the scaling behaviour of $\nu_{\mathcal{U},l}(k) dk$, 
takes only into account the scaling behaviour of the dominant term
$$\sum_{n= 1}^{l}
\, \eta_{n-1} e^{-  2 i \pi (n - \frac{a-b}{a+b})q
},$$ 
with a (dominant) power of $l$, if it exists, $l$ going to infinity, and if $\beta$ 
denotes this exponent, one has 
$$\alpha ~=~ 2 \beta - 1.$$
It amounts to understand
the scaling behaviour of the 
sums
\begin{equation}
\label{basicsum}
\sum_{n=1}^{l} \, \eta_{n-1} e^{-  2 i \pi n q}
\end{equation} 
with a power of $l$, when $l$ goes to infinity. 
We now precise these notions.

\begin{definition}
\label{rarefied}
Let $p \in \mathbb{N} \setminus \{0, 1, 2\}$.
The subsequences $\eta_i , ~\eta_{i+p}, ~\eta_{i+2 p }, ~\eta_{i+ 3 p}, \ldots$
are refered to as $p$-rarefied Thue-Morse sequences
for $i = 0, 1, \ldots, p-1$
($p$ is not necessarily a prime number). Their
partial sums, called $p$-rarefied sums of the Thue-Morse sequence, are denoted
\begin{equation}
\label{praresum}
S_{p, i}(n) ~=~ \sum_{
\stackrel{0 \leq j < n}{\small  j \equiv i ~(\mbox{{\scriptsize mod}} \, p)}
} \, \eta_{j} \qquad \quad n = 1, 2, \ldots
\end{equation}
\end{definition}

The following Lemma is obvious.

\begin{lemma}
\label{3possib}
Let $q \in \mathbb{R}$ and denote
$W_{q} := \{e^{-  2 i \pi n q} \mid n \in \mathbb{Z} \}$
the countable subset of the unit circle
$|z|=1$. Then, 
$$
\# \{ n \in \mathbb{Z} \mid 
e^{-  2 i \pi n q} = v\} ~=
\left\{
\begin{array}{lll}
0 & \mbox{if and only if} & v \not\in W_{q},\\
1 & \mbox{if and only if} & v \in W_{q} ~\mbox{and}~ q\not\in \mathbb{Q},\\
\infty & \mbox{if and only if} & v \in W_{q} ~\mbox{and}~ q \in \mathbb{Q}. 
\end{array}
\right.
$$
If
$q \in  \mathbb{Q}, v \in W_{q}$, 
and 
if we write
$q = \frac{t}{p}$, with
$t \in \mathbb{Z}, p \in \mathbb{N} \setminus \{0\}$, and
gcd$(t,p) = 1$, then
there exists an integer
$r \in \{0, 1, 2, \ldots, p-1\}$ such that
$$\{ n \in \mathbb{Z} \mid e^{- 2 i \pi n q} = v \} = r + p \mathbb{Z}.$$
\end{lemma}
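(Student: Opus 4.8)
This is Lemma \ref{3possib}, which describes the fibers of the map $n \mapsto e^{-2i\pi n q}$ on $\mathbb{Z}$. The statement is elementary, so the plan is simply to analyze the three cases by tracking when $e^{-2i\pi n q} = e^{-2i\pi n' q}$, i.e. when $(n-n')q \in \mathbb{Z}$.

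The plan is to fix $v \in \mathbb{C}$ with $|v| = 1$ and count the solutions of $e^{-2i\pi n q} = v$. First I would observe that if $v \notin W_q$ there are by definition no solutions, giving the first line. For the remaining two lines, assume $v \in W_q$, so there is at least one $n_0 \in \mathbb{Z}$ with $e^{-2i\pi n_0 q} = v$. Then for any $n \in \mathbb{Z}$, the identity $e^{-2i\pi n q} = v$ holds if and only if $e^{-2i\pi (n-n_0) q} = 1$, which holds if and only if $(n-n_0) q \in \mathbb{Z}$. So the fiber over $v$ is the translate $n_0 + \{ j \in \mathbb{Z} \mid jq \in \mathbb{Z}\}$, and everything reduces to understanding the subgroup $H_q := \{ j \in \mathbb{Z} \mid jq \in \mathbb{Z}\}$ of $\mathbb{Z}$.

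If $q \notin \mathbb{Q}$, then $jq \in \mathbb{Z}$ with $j \in \mathbb{Z}$ forces $j = 0$ (otherwise $q = (jq)/j$ would be rational); hence $H_q = \{0\}$ and the fiber is the single point $\{n_0\}$, giving the second line. If $q \in \mathbb{Q}$, write $q = t/p$ in lowest terms with $p \geq 1$ and $\gcd(t,p) = 1$. Then $jq = jt/p \in \mathbb{Z}$ if and only if $p \mid jt$, and since $\gcd(t,p) = 1$ this is equivalent to $p \mid j$; hence $H_q = p\mathbb{Z}$, which is infinite, giving the third line. For the final assertion, the fiber over $v$ is $n_0 + p\mathbb{Z}$; writing $n_0 = r + p\lfloor n_0/p\rfloor$ with $r \in \{0,1,\dots,p-1\}$ its residue modulo $p$, we get $n_0 + p\mathbb{Z} = r + p\mathbb{Z}$, as claimed.

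There is no real obstacle here; the only point requiring a touch of care is the use of $\gcd(t,p) = 1$ in the case $q \in \mathbb{Q}$ to pass from $p \mid jt$ to $p \mid j$ (Euclid's lemma), which is exactly what pins the fiber down to a single residue class modulo $p$ rather than a coarser arithmetic progression. Everything else is bookkeeping on the subgroup structure of $\mathbb{Z}$, consistent with the paper's remark that the lemma is obvious.
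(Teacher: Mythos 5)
Your proof is correct and complete; the paper itself offers no argument, simply declaring the lemma obvious, and your reduction of the fiber to a coset of the subgroup $H_q=\{j\in\mathbb{Z}\mid jq\in\mathbb{Z}\}$ (with the $\gcd(t,p)=1$ step handled via Euclid's lemma) is exactly the standard argument the authors had in mind.
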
  

First, let us define the following function 
$\alpha_{l}(k)$ of $k \in \mathbb{R}$ and $l \geq 1$, 
with card$(U_l) = l$, as follows
\begin{equation}
\label{alphFUNC}
\mbox{card}(U_l)^{\alpha_{l}(k)} 
:=~
\frac{1}
{ \mbox{card}(
U_{l}
) }
\,
\left| \sum_{j=0}^{l-1} \, \eta_{j} e^{- 2 i \pi j k}
\right|^2 \, . 
\end{equation}

\begin{proposition}
\label{qnonrat}
For almost all $k \in \mathbb{R}$ 
\begin{equation}
\label{basicsum1}
\lim_{n \to +\infty} \alpha_{2^n}(k) ~=~ -1.
\end{equation} 
\end{proposition}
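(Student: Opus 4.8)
The plan is to reduce the claim to a Birkhoff average for the doubling map and then invoke the pointwise ergodic theorem.

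First I would evaluate $\alpha_{2^n}(k)$ explicitly. Taking $l = 2^n$ in \eqref{alphFUNC}, the sum $\sum_{j=0}^{2^n-1}\eta_j e^{-2i\pi jk}$ is exactly the quantity $\widehat{\eta_n}(k)$ of \eqref{TFTM}, so that $(2^n)^{\alpha_{2^n}(k)} = 2^{-n}\,|\widehat{\eta_n}(k)|^2$. The Riesz product identity \eqref{rieszTM} then gives
$$(2^n)^{\alpha_{2^n}(k)} ~=~ \prod_{j=0}^{n-1}\bigl(1-\cos(2\pi 2^j k)\bigr) ~=~ \prod_{j=0}^{n-1} 2\sin^2(\pi 2^j k).$$
Taking logarithms in base $2$ and dividing by $n$ yields the clean identity
$$\alpha_{2^n}(k) ~=~ 1 + \frac{2}{n}\sum_{j=0}^{n-1}\log_2\bigl|\sin(\pi 2^j k)\bigr|,$$
so the proposition is equivalent to proving that the average on the right converges to $-1$ for almost every $k$.

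Next I would recognize this average as a Birkhoff sum. Since $y \mapsto |\sin(\pi y)|$ is $1$-periodic, I may replace $k$ by its fractional part $x = \{k\}$ and write $|\sin(\pi 2^j k)| = |\sin(\pi T^j x)|$, where $T\colon x \mapsto 2x \bmod 1$ is the doubling map of the torus $\mathbb{R}/\mathbb{Z}$ (indeed $T^j(\{k\}) = \{2^j k\}$). Setting $\phi(x) := \log_2|\sin(\pi x)|$, the average becomes $\frac{1}{n}\sum_{j=0}^{n-1}\phi(T^j x)$. The map $T$ preserves Lebesgue measure and is ergodic — a classical fact — and $\phi \in L^1([0,1))$ because its only singularity, at $x = 0$, is logarithmic and hence integrable. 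Birkhoff's pointwise ergodic theorem then applies and gives, for almost every $x$,
$$\lim_{n\to\infty}\frac{1}{n}\sum_{j=0}^{n-1}\phi(T^j x) ~=~ \int_0^1 \log_2|\sin(\pi x)|\,dx.$$

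Finally I would compute this integral. From the classical evaluation $\int_0^1 \log|\sin(\pi x)|\,dx = -\log 2$ (equivalently $\int_0^\pi \log\sin t\,dt = -\pi\log 2$), one obtains $\int_0^1 \log_2|\sin(\pi x)|\,dx = -1$. Substituting back, the Birkhoff average converges to $-1$, whence $\alpha_{2^n}(k) \to 1 + 2(-1) = -1$ for almost every $k$, which is the assertion. The only point requiring genuine care — and the main, though mild, obstacle — is the verification that $\phi$ lies in $L^1$, so that Birkhoff's theorem is legitimately applied to an integrand that is unbounded below near the singularities of the sine; this is precisely why the convergence holds only almost everywhere, failing on the null set of $k$ whose doubling orbit approaches these singularities too rapidly.
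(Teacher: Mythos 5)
Your proof is correct and follows essentially the same route as the paper: both reduce $\alpha_{2^n}(k)$ via the Riesz product identity to the ergodic average of $\log|\sin(\pi\, 2^j k)|$ along the doubling map and evaluate $\int_0^1 \log|\sin(\pi x)|\,dx = -\log 2$. The only cosmetic difference is that the paper invokes a theorem of Raikov (see also Kac) on sums of the type $\sum f(2^k t)$, which is precisely the pointwise ergodic statement for the doubling map that you obtain from Birkhoff's theorem.
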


\begin{proof}
This is fairly classical.
We have
$$\left| \sum_{j=0}^{2^n -1} \, \eta_{j} e^{- 2 i \pi j k}
\right|^2  
~=~ 2^{2 n} \, \prod_{j=0}^{n-1} \, \sin^{2} (\pi 2^j k)
~=~
2^{2 n} \, \prod_{j=0}^{n-1} \, \sin^{2} (\pi \varphi_j)
$$
where $\varphi_j = 2^j \varphi_0, j \geq 0,$ 
%is the fractional part of
%$2^j \varphi_0$ 
with $\varphi_0 = k$. 
We deduce
$$\alpha_{2^n}(k) = 
1 + \frac{2}{n \log 2} \, \sum_ {j=0}^{n-1} \, 
\log \bigl|
\sin (\pi \varphi_{j} ) \bigr| .$$
By a theorem of Raikov \cite{raikov} \cite{kac}, for 
almost every initial value $\varphi_0 = k$, 
%the sequence
%$(\varphi_n)_{n \geq 0}$ is uniformly distributed in the interval $[0,1]$.
%Therefore 
we can change the summation into an integral
and obtain the claim
$$\lim_{n \to +\infty}
\alpha_{2^n}(k)
~=~ 1 + \lim_{n \to +\infty}
\frac{2}{n \log 2} \, \sum_ {j=0}^{n-1} \, \log\bigl|
\sin (\pi \varphi_n ) \bigr|$$
$$= 1 + \frac{2}{\log 2} \int_{0}^{1}
\, \log\bigl|
\sin (\pi x ) \bigr| dx = -1.$$
\end{proof}

\begin{corollary}
\label{singmesNULLE}
The singular continuous component of the spectrum of
the Thue-Morse quasicrystal given
by $\mu = \sum_{n \geq 1} \delta_{f(n)}$
on $\Lambda_{a,b}$ is a subset of measure zero of $\mathbb{R}$.
\end{corollary}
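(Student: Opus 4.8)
The plan is to leverage Proposition \ref{qnonrat} directly: it establishes that for almost every $k \in \mathbb{R}$, the scaling exponent $\alpha_{2^n}(k)$ of the Thue-Morse factor converges to $-1$ as $n \to +\infty$. The goal is to show that the set of wave vectors $k$ carrying the singular continuous component has Lebesgue measure zero, so I must connect the (AGL)-characterization of the singular continuous part (Definition \ref{argumentsBT+AGL}(ii)) to the almost-everywhere limit furnished by Proposition \ref{qnonrat}.

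First I would recall, from the discussion in Subsection \ref{S3.4}, that along the canonical averaging sequence with $l = 2^h$ the scaling behaviour of the approximant measure $\nu_{\mathcal{U},l}(k)\,dk$ is dictated and dominated by the dominant Thue-Morse summation $\sum_{n=1}^{l} \eta_{n-1} e^{-2 i \pi (n - \frac{a-b}{a+b})q}$, and that the associated scaling exponent $\beta$ is related to the (AGL)-exponent by $\alpha = 2\beta - 1$. By the very definition \eqref{alphFUNC} of $\alpha_l(k)$, the quantity $\alpha_{2^n}(k)$ is precisely this dominant scaling exponent (the phase shift $\frac{a-b}{a+b}$ and the constant prefactors $\kappa,\kappa_\eta$ affect only the constant in front of the power of $l$, not the exponent itself). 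Hence for $k$ to lie in the singular continuous part, the (AGL)-argument requires a scaling exponent $\alpha \in (-1,1)$, which forces $\lim_{n\to+\infty}\alpha_{2^n}(k) = \alpha > -1$, in strict contrast to the generic value $-1$.

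Next I would invoke Proposition \ref{qnonrat}: the set
$$E := \left\{ k \in \mathbb{R} \mid \lim_{n \to +\infty} \alpha_{2^n}(k) = -1 \right\}$$
has full Lebesgue measure, so its complement $\mathbb{R}\setminus E$ is a null set. Every $k$ belonging to the singular continuous component must have $\lim_{n\to+\infty}\alpha_{2^n}(k)$ strictly larger than $-1$ (indeed $\alpha > -1$, equivalently $\beta > 0$), and therefore cannot lie in $E$. Consequently the singular continuous component is contained in $\mathbb{R}\setminus E$, which is a set of measure zero. This yields the claim.

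\textbf{The main obstacle} will be justifying rigorously that membership of $k$ in the singular continuous part forces $\lim_n \alpha_{2^n}(k) > -1$, rather than merely $\alpha > -1$ for some abstract exponent $\alpha$. This requires identifying the (AGL)-exponent $\alpha$ from Definition \ref{argumentsBT+AGL}(ii) with the concrete limit of $\alpha_{2^n}(k)$ defined in \eqref{alphFUNC}, and confirming that the limit genuinely exists along $l = 2^n$ for the relevant $k$ (so that the $\liminf$ and $\limsup$ bounds in \eqref{sing1AGL}–\eqref{sing2AGL} pin down a single exponent). Once that identification is granted—using the reduction in Subsection \ref{S3.4} that the Lebesgue factor of \eqref{braggapprox} contributes only Bragg peaks and the Thue-Morse factor governs the singular scaling—the measure-zero conclusion follows immediately from Proposition \ref{qnonrat}.
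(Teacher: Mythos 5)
Your proposal is correct and follows essentially the same route as the paper: both arguments use Proposition \ref{qnonrat} to show that for almost every $k$ the approximant measures along the subsequence $l=2^n$ scale with exponent tending to $-1$, which is incompatible with the (AGL)-requirement of an exponent $\alpha\in(-1,1)$, so the singular continuous component is contained in the measure-zero exceptional set of that proposition. The only cosmetic difference is that the paper phrases this as a contradiction (the normalized $\limsup$ vanishes for almost all $k$, violating \eqref{sing2AGL}, with a remark on extending from $(U_{2^n})$ to all averaging sequences via the block structure of the Thue--Morse sequence), whereas you phrase it as a direct containment in the complement of the full-measure set.
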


\begin{proof}
Assume that $k$ belongs to the singular component.
Then, by the (AGL)-argument, there exists
$\alpha \in (-1,1)$ such that
\begin{equation}
\label{sing1AGLsing}
0 \leq \liminf_{l \to +\infty} \frac{\nu_{\mathcal{U},l}(k) dk}{\mbox{card}(
U_{l}
)^{\alpha} dk}
\leq
\limsup_{l \to +\infty} \frac{\nu_{\mathcal{U},l}(k) dk}{\mbox{card}(
U_{l}
)^{\alpha} dk}
< +\infty  
\end{equation}
for all $\mathcal{U} ~=~ (U_l)_{l \geq 0} \in \mathcal{A}_{a,b}^{{\scriptsize supp}(\omega)}$
such that
$$0 \neq \limsup_{l \to +\infty} \frac{\nu_{\mathcal{U},l}(k) dk}{\mbox{card}(
U_{l}
)^{\alpha} dk}.$$
Here, with
the particular subsequence of finite approximants
$(U_{2^n})$,
by \eqref{braggapprox} and Proposition \ref{qnonrat}, 
we have    
$$\limsup_{n \to +\infty} 
\frac{
\nu_{\mathcal{U},2^n}(k) dk}{
\mbox{card}(
U_{2^n}
)^{\alpha} dk}   ~=~ 
\bigl|
\kappa_{\eta}(k)\bigr|^2 \times \limsup_{n \to +\infty} 
\frac{
\mbox{card}(U_{2^n}
)^{\alpha_{2^n}(k)}
}{\mbox{card}(
U_{2^n}
)^{\alpha} dk}
 dk ~=~ 0$$
for almost all $k$, since 
$\lim_{n \to \infty} (-\alpha + \alpha_{2^n}(k)) = -\alpha -1 < 0$.
By the block structure of 
the Prouhet-Thue-Morse sequence
we deduce that 
$$\limsup_{l \to +\infty} \frac{
\nu_{\mathcal{U},l}(k) dk}{
\mbox{card}(
U_{l}
)^{\alpha} dk} ~=~ 0$$
holds for all 
$\mathcal{U} ~=~ (U_l)_{l \geq 0} \in \mathcal{A}_{a,b}^{{\scriptsize supp}(\omega)}$
in a similar way.  
This is impossible, except possibly for the $k$s which lie 
in a set of measure zero 
for which Proposition \ref{qnonrat} does not hold.
\end{proof}

Using the (AGL)-argument 
in a careful study of $\frac{4 \pi}{a+b} \mathbb{Q}$,
and invoking Corollary \ref{singmesNULLE}, 
we will deduce in Section \ref{S5}
that this singular continuous component
is mostly $$\frac{4 \pi}{a+b} \mathbb{Q},$$
perhaps up to a subset of 
$\mathbb{R} \setminus \frac{4 \pi}{a+b} \mathbb{Q}$ of measure zero.

Now let $q = \frac{t}{p} \in \mathbb{Q}$, with
$t \in \mathbb{Z}, p \in \mathbb{N} \setminus \{0\}$, and
gcd$(t,p) = 1$. Assume that 
$$\frac{4 \pi} {a+b} q \not\in \mathcal{B}^{(\mu)}_{a,b}.$$
This implies that $p \neq 1$ and that $p$ is not  
a power of $2$.  
For these values of $p$, by
Lemma \ref{3possib}, we have, for any integer $N \geq 1$, 
\begin{equation}
\label{qrat}
\frac{1}{N p + 1} 
\left| \sum_{n=1}^{N p + 1} \, \eta_{n-1} e^{ - 2 i  \pi n \frac{t}{p} }
\right|^2 
~=~
\frac{1}{N p + 1}
\left| \sum_{j=0}^{p-1} S_{p,j}(N p) e^{ - 2 i  \pi \frac{j t}{p} } 
\right|^2 .
\end{equation}

The equality \eqref{qrat} is the key relation for
reducing the problem to the asymptotic behaviour of the
$p$-rarefied sums $S_{p,j}(N p)$ when $N$ goes to infinity.
We will continue in Section \ref{S5} the characterization
of the singular component of the spectrum, but before we need 
to recall some basic facts about the 
$p$-rarefied sums of the Thue-Morse
sequence (in Section \ref{S4}) when $p$ is a prime number, and
develop the non-prime case
as in Grabner \cite{grabner1} \cite{grabner2}.

\section{The rarefaction phenomenon}
\label{S4}

We will  
follow Goldstein, Kelly and Speer \cite{goldsteinkellyspeer}
(which originates in Dumont \cite{dumont}) for the fractal structure 
of the $p$-rarefied sums of the Thue-Morse sequence.  
In their paper $p$ is always an odd prime number whereas we need
to consider the case in which $p$ is an integer $\geq 3$ 
which is not necessarily prime (by \eqref{qrat}), 
and which is not a power of $2$.  

In 1969 Newman \cite{newman} proved a remarkable conjecture of Moser 
which asserts that
$$S_{3, 0}(n) > 0 \qquad \quad \mbox{for any}~ n \geq 1,$$
namely that the $3$-rarefied sequence of the Thue-Morse
sequence contains a
dominant proportion of ones.
More precisely 
Newman proved, for all $n \geq 1$,
\begin{equation}
\label{newmanineq}
\frac{3^{-\beta}}{20} ~<~ 
\frac{S_{3, 0}(n)}{n^{\beta}} ~<~ 5 \, . \, 3^{-\beta}
\qquad \mbox{with}~ \beta = \frac{\log 3}{\log 4},
\end{equation}
and that the limit $\lim_{n \to +\infty} S_{3, 0}(n)/n^{\beta}$
does not exist. 

Then Coquet \cite{coquet}
improved \eqref{newmanineq} by the more precise statement
\begin{equation}
\label{coqueteq}
S_{3, 0}(n) = n^{\frac{\log 3}{\log 4}} \, . \, \psi_{3,0}\left(
\frac{\log n}{\log 4}
\right)
+ \frac{\epsilon_{3,0}(n)}{3}  
\end{equation}
where $\epsilon_{3,0}(n) \in \{0, \pm 1\}$
and
where $\psi_{3,0}(x)$ is a continuous nowhere differentiable
function of period 1 
which assumes all its values in the closed
interval
$$[\inf_{x \in  [0,1]} \psi_{3,0}(x) , \sup_{x \in  [0,1]} \psi_{3,0}(x)] ~=~ 
\left[\, \liminf \frac{S_{3, 0}(n)}{n^{\frac{\log 3}{\log 4}}} , 
\limsup \frac{S_{3, 0}(n)}{n^{\frac{\log 3}{\log 4}}} \, \right].$$ 
The latter is explicitly given by 
$$
 \left[ \left(\frac{1}{3}\right)^{\frac{\log 3}{\log 4}} \frac{2 \sqrt{3}}{3} , \frac{55}{3} 
\left(
\frac{1}{65}\right)^{\frac{\log 3}{\log 4}} 
\right].$$
Grabner \cite{grabner1} obtained in 1993 the Newman-type strict inequality 
for
$p = 5$ 
$$S_{5, 0}(n) > 0 
\qquad \quad \mbox{for any}~ n \geq 1,$$ 
and a Coquet-type fractal description
of $S_{5, 0}(n)$ as
$$S_{5, 0}(n) = n^{\frac{\log 5}{\log 16}} \Phi_{5,0}
\left(\frac{\log n}{\log 16}\right)
+ \frac{\epsilon_{5,0}(n)}{5}
\qquad \quad \mbox{for all}~ n \geq 1$$
where $\Phi_{5,0}(x)$ is also a continuous nowhere differentiable
function of period 1. We refer to \cite{grabner1} for the details.
Similar 
results were found by 
Grabner, Herendi and Tichy 
for 
$p = 17$ \cite{grabnerherenditichy}.

In 1992 Goldstein, Kelly and Speer \cite{goldsteinkellyspeer}
proposed a general matrix approach to deal simultaneously with all the 
sums $S_{p,j}(n), \, 
j \in \{0, 1, 2, \ldots, p-1\}$. They proved that
the Coquet-type expressions are very general, at least when $p$ is an odd
prime number.
They obtained, for all odd prime numbers $p$ and all $n \in \mathbb{N}$,
the existence of 
\begin{itemize}
\item
$p$ continuous nowhere differentiable functions $\psi_{p,j}(x) , j = 0, 1, 2, \ldots, p-1$,
of period $1$,   
\item an exponent $\beta = \frac{\log \lambda_1}{s \log 2}$ which remarkably
depends only upon $p$ and not on $j$,  
\item $p$ error terms $E_{p,j}(n)$ 
which are bounded above uniformly, for some constant $C > 0$,
for all $n \in \mathbb{N}$ and all $j \in \{0, 1, \ldots, p-1\}$, 
in the sense
$$|E_{p,j}(n)| \leq C n^{\beta_1}\qquad \mbox{with}~ \beta_1 = 
\left\{
\begin{array}{ll}
\frac{\log \lambda_2}{s \log 2} & \mbox{for}~ \lambda_2 > 1\\
0 & \mbox{for}~ \lambda_2  < 1
\end{array}
\right.
$$
\end{itemize}
\noindent
such that
\begin{equation}
\label{gkspeer}
S_{p,j}(n) ~=~ n^{\beta} \psi_{p,j}\left(\frac{\log n}{r \, s \, \log 2}\right)
+ E_{p,j}(n),
\end{equation}
where 
\begin{itemize}
  \item $r \in \{1, 2, 4\}$ is an integer which remarkably depends upon
$p$ and not on $j$,
  \item   $s$ is the order of $2$ in the group 
$\bigl(\mathbb{Z}/p \mathbb{Z}\bigr)^{*}$ 
of the invertible elements of the field
$\mathbb{Z}/p \mathbb{Z}$ (therefore depends upon
$p$ and not of $j$),  
  \item the real numbers 
$$\lambda_1 > \lambda_2 > \ldots \geq 0$$
are the moduli of the eigenvalues, in decreasing order, 
of the $p \times p$
matrix $$M = \bigl( S_{p, i-j}(2^s)\bigr)_{0 \leq i, j \leq p-1}$$
with integral coefficients.
\end{itemize}

\begin{proposition}
Let {\bf S}$(n) := (S_{p, 0}(n), S_{p, 1}(n), \ldots, S_{p, p-1}(n))^{t}$ denote the vector of 
$\mathbb{R}^{p}$
with integer entries (~$\mbox{}^{t}$ for transposition).
Then
\begin{equation}
\label{equajuste}
\mbox{{\bf S}}(2^s n) ~=~ M \, \mbox{{\bf S}}(n).  
\end{equation}
\end{proposition}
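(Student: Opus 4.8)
The plan is to reduce the identity to the two structural facts that drive the whole rarefaction theory: the multiplicativity of the Thue--Morse sequence under the defining substitution (equivalently, the additivity of the binary digit sum) together with the fact that $s$ is the multiplicative order of $2$ modulo $p$, so that $2^s \equiv 1 \pmod p$. Concretely, since $s(2^s m + r) = s(m) + s(r)$ for every $r$ with $0 \le r < 2^s$ (the low $s$ binary digits of $2^s m + r$ are exactly those of $r$, the higher ones those of $m$, and there is no carry), one obtains the block relation $\eta_{2^s m + r} = \eta_m\,\eta_r$. This is the only place where the specific arithmetic of the Thue--Morse sequence enters the argument.

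Next I would expand the left-hand side $S_{p,i}(2^s n) = \sum \eta_k$, the sum running over $0 \le k < 2^s n$ with $k \equiv i \pmod p$, by parametrising every such $k$ uniquely as $k = 2^s m + r$ with $0 \le m < n$ and $0 \le r < 2^s$; this is a bijection between $[0, 2^s n) \cap \mathbb{Z}$ and $[0,n) \times [0, 2^s)$. Applying the block relation gives $\eta_k = \eta_m \eta_r$, while $2^s \equiv 1 \pmod p$ turns the single constraint $2^s m + r \equiv i \pmod p$ into $m + r \equiv i \pmod p$. Hence
\[
S_{p,i}(2^s n) = \sum_{r=0}^{2^s - 1} \eta_r \sum_{\substack{0 \le m < n \\ m \equiv i - r \ (\mathrm{mod}\ p)}} \eta_m
= \sum_{r=0}^{2^s - 1} \eta_r\, S_{p,\,i-r}(n),
\]
where the index of $S_{p,\cdot}(n)$ is read modulo $p$.

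To recognise the matrix product I would then group the $r$-sum by the residue class $c := (i - r) \bmod p$, i.e. collect all $r \in [0, 2^s)$ with $r \equiv i - c \pmod p$. Since the factor $S_{p,c}(n)$ depends on $r$ only through $c$, this gives
\[
S_{p,i}(2^s n) = \sum_{c=0}^{p-1} S_{p,c}(n) \sum_{\substack{0 \le r < 2^s \\ r \equiv i - c \ (\mathrm{mod}\ p)}} \eta_r
= \sum_{c=0}^{p-1} S_{p,\,i-c}(2^s)\, S_{p,c}(n),
\]
again reading indices modulo $p$. The coefficient $S_{p,\,i-c}(2^s)$ is exactly the $(i,c)$ entry of $M = \bigl(S_{p,\,i-j}(2^s)\bigr)_{0 \le i,j \le p-1}$, so the right-hand side is the $i$-th component of $M\,\mathbf{S}(n)$, which proves the claimed identity componentwise for every $i$.

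The only genuinely delicate point is bookkeeping rather than analysis: one must keep the indices of the rarefied sums consistently in $\mathbb{Z}/p\mathbb{Z}$ (so that $S_{p,\,i-r}$ and $S_{p,\,i-c}$ make sense as cyclic indices) and verify that $k = 2^s m + r$ is an exact bijection on the relevant ranges. Everything else is the substitution identity $\eta_{2^s m + r} = \eta_m \eta_r$ combined with $2^s \equiv 1 \pmod p$; no convergence or limiting argument is involved, so the statement is an exact finite combinatorial identity valid for every $n \ge 1$.
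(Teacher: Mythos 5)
Your argument is correct and complete. The paper itself gives no proof of this proposition: it simply cites Goldstein, Kelly and Speer \cite{goldsteinkellyspeer}, p.~3, where the identity is established. Your self-contained derivation rests on exactly the two facts that make the statement work --- the carry-free splitting $s(2^s m + r) = s(m) + s(r)$ for $0 \le r < 2^s$, hence $\eta_{2^s m + r} = \eta_m \eta_r$, and the congruence $2^s \equiv 1 \pmod p$ coming from $s$ being the order of $2$ in $(\mathbb{Z}/p\mathbb{Z})^*$ --- and the bookkeeping (the bijection $k \mapsto (m,r)$, the regrouping by the residue $c = i - r \bmod p$, and reading the indices of $S_{p,\cdot}$ cyclically, consistently with the definition of $M = (S_{p,i-j}(2^s))$) is all handled properly. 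This is the standard proof of the self-similarity relation, so nothing is lost relative to the cited source; if anything, spelling it out makes the paper's later use of the matrix $M$ and its eigenvalues more transparent to the reader.
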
 

\begin{proof}
\cite{goldsteinkellyspeer} p. 3.
\end{proof}

These authors obtain the fractal functions
$\psi_{p,j}(x)$ by vectorial interpolation of 
\eqref{equajuste} by constructing a fractal function
{\bf F}$(x) := (F_{0}(x), F_{1}(x), \ldots, F_{p-1}(x))^{t}$
which obeys the self-similar property
\begin{equation}
\label{equaapproxF}
\mbox{{\bf F}}(2^s x) ~=~ M \, \mbox{{\bf F}}(x).
\end{equation}
on $[0, +\infty)$ and is such that
{\bf F} = {\bf S}
on $\mathbb{N}$.

\begin{proposition}
Let $< 2 >$ denote the subgroup of
$(\mathbb{Z}/ p \mathbb{Z})^{*}$
generated by $2$ and let $a < 2 >$ any of its cosets.
Then the eigenvalues of $M$ are
\begin{equation}
\label{ptop}
\xi_a = (-2 i )^{s} \, \prod_{j \in a < 2 >} \, \sin(2 \pi j/p)
\end{equation} 
and satisfy
\begin{equation}
\label{pchia}
\prod_{a} \, \xi_a = p
\end{equation}
\end{proposition}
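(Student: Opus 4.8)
\emph{The plan} is to exploit the fact that the matrix $M$ of \eqref{equajuste} is \emph{circulant}: its $(m,n)$ entry $S_{p,\,m-n}(2^s)$ depends only on $m-n$ modulo $p$. The eigenvalues of a circulant matrix are the discrete Fourier transform of its first column, so, writing $\zeta := e^{2\pi i /p}$, they are
$$\lambda_k ~=~ \sum_{r=0}^{p-1} S_{p,r}(2^s)\,\zeta^{rk} ~=~ \sum_{j=0}^{2^s-1}\eta_j\,\zeta^{jk}, \qquad k=0,1,\ldots,p-1,$$
the second equality coming from regrouping the $2^s$ terms $\eta_j$ according to the residue $r=j \bmod p$, for which $\zeta^{rk}=\zeta^{jk}$.

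First I would insert the Thue--Morse factorization $\sum_{j=0}^{2^s-1}\eta_j z^j = \prod_{l=0}^{s-1}(1-z^{2^l})$ (a consequence of $\eta_{2j}=\eta_j$, $\eta_{2j+1}=-\eta_j$, and already implicit in \eqref{rieszTM}), evaluated at $z=\zeta^k$, to get
$$\lambda_k ~=~ \prod_{l=0}^{s-1}\bigl(1-\zeta^{k2^l}\bigr).$$
Since $2^s\equiv 1 \pmod p$, for $k\neq 0$ the exponents $k2^l \bmod p$ run exactly once through the coset $k\langle 2\rangle$; hence $\lambda_{2k}=\lambda_k$ and $\lambda_k = \prod_{j\in k\langle 2\rangle}(1-\zeta^j)$ depends only on the coset of $k$. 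This is why the eigenvalues are indexed by cosets, each occurring with multiplicity $s$; the value $k=0$ contributes $\lambda_0=\sum_{j=0}^{2^s-1}\eta_j=0$, and $1+s\cdot\frac{p-1}{s}=p$ accounts for all eigenvalues (here $p$ is the odd prime of the Goldstein--Kelly--Speer setting, so $(\mathbb{Z}/p\mathbb{Z})^{*}=\{1,\ldots,p-1\}$).

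The key step is to pass from $\prod_{j\in C}(1-\zeta^j)$, with $C=a\langle2\rangle$, to the sine form \eqref{ptop}. I would route through $1-\zeta^{2j} = -2i\,e^{2\pi i j/p}\sin(2\pi j/p)$, which gives
$$\prod_{j\in C}\bigl(1-\zeta^{2j}\bigr) ~=~ (-2i)^s\Bigl(\prod_{j\in C}\sin(2\pi j/p)\Bigr)\, e^{2\pi i (\sum_{j\in C} j)/p}.$$
The phase is trivial: as $C$ is a coset, $\sum_{j\in C} j \equiv a(1+2+\cdots+2^{s-1})=a(2^s-1)\equiv 0 \pmod p$, so the exponential equals $1$. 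On the other hand multiplication by $2$ permutes $C$, whence $\prod_{j\in C}(1-\zeta^{2j})=\prod_{j\in C}(1-\zeta^j)$. Combining, $\xi_a=\prod_{j\in a\langle 2\rangle}(1-\zeta^j)=(-2i)^s\prod_{j\in a\langle2\rangle}\sin(2\pi j/p)$, which is \eqref{ptop}.

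Finally, for \eqref{pchia} I would multiply over all cosets. Since the cosets of $\langle2\rangle$ partition $\{1,\ldots,p-1\}$, the identity $\xi_a=\prod_{j\in a\langle2\rangle}(1-\zeta^j)$ yields
$$\prod_a \xi_a ~=~ \prod_{j=1}^{p-1}\bigl(1-\zeta^j\bigr) ~=~ p,$$
the last equality being the classical cyclotomic evaluation of $(x^p-1)/(x-1)=\prod_{j=1}^{p-1}(x-\zeta^j)$ at $x=1$. I expect the main obstacle to be the third step: checking that the factorization $1-\zeta^j=-2i\,e^{\pi ij/p}\sin(\pi j/p)$, with its $\sin(\pi\cdot)$ factors and nontrivial phases, collapses \emph{exactly} to $(-2i)^s\prod\sin(2\pi j/p)$. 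The device above---working with $1-\zeta^{2j}$ and using both that $C$ is closed under doubling and that $\sum_{j\in C}j\equiv 0\pmod p$---is precisely what kills the phase and replaces $\sin(\pi\cdot)$ by $\sin(2\pi\cdot)$.
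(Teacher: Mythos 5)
Your argument is correct and complete. Note, however, that the paper itself gives no proof at all here: it simply cites Proposition 3.3 of Goldstein--Kelly--Speer, so what you have done is reconstruct that external argument from scratch. Your route --- observing that $M=(S_{p,i-j}(2^s))$ is circulant, diagonalizing it by the discrete Fourier transform, collapsing the resulting sum $\sum_{j=0}^{2^s-1}\eta_j\zeta^{jk}$ via the Thue--Morse factorization $\prod_{l=0}^{s-1}(1-z^{2^l})$, and then using that $k2^l$ sweeps the coset $k\langle 2\rangle$ --- is exactly the natural (and, as far as the content goes, the GKS) proof, and your handling of the delicate point is right: replacing $1-\zeta^j$ by $1-\zeta^{2j}$, which is legitimate because doubling permutes the coset, is precisely what turns the half-angle $\sin(\pi j/p)$ into $\sin(2\pi j/p)$ and makes the accumulated phase $e^{2\pi i(\sum_{j\in C}j)/p}$ vanish via $a(2^s-1)\equiv 0\pmod p$. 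Two minor remarks: the set of eigenvalues is insensitive to the sign convention in the DFT, since $k\mapsto -k$ permutes the cosets and conjugates the $\xi_a$, so your statement survives either convention; and you correctly account for the extra eigenvalue $\lambda_0=\sum_{j<2^s}\eta_j=0$, which the proposition's wording silently omits (it is not one of the $\xi_a$, as $\prod_a\xi_a=p\neq 0$ confirms). The final identity $\prod_a\xi_a=\prod_{j=1}^{p-1}(1-\zeta^j)=p$ via the cyclotomic evaluation is standard and correctly applied.
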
 

\begin{proof}
Proposition 3.3 in \cite{goldsteinkellyspeer}.
\end{proof}

As a corollary of \eqref{pchia}
\begin{equation}
\lambda_1 > 1 \qquad \quad \lambda_1 \leq 2^s
\end{equation}
always hold, 
the second largest $\lambda_2$ among the magnitudes
of eigenvalues of $M$ can be $< 1$ or  
$> 1$, and the first and second magnitudes of eigenvalues
$\lambda_1$ and $\lambda_2$ can be explicitly computed for some classes
of prime numbers $p$.
These authors proved that the asymptotic growth of the summation
$S_{p,j}(n)$ is of the order of $n^{\beta}$
in the sense that
\begin{equation}
\label{gkspeerLIM}
- \infty ~<~ \liminf_{n \to \infty} \frac{S_{p,j}(n)}{n^{\beta}} 
~<~ \limsup_{n \to \infty} \frac{S_{p,j}(n)}{n^{\beta}}~<~ + \infty
\end{equation}
holds, for all odd prime number $p$ and all $j = 0, 1, \ldots, p-1$.
Let us observe that the $p$ error terms $E_{p,j}(n)$ 
become negligible at large $n$ in
\eqref{gkspeer} since $\beta_1 < \beta$, and 
therefore have no influence on the computation of
the lower and upper bounds in \eqref{gkspeerLIM}.

Several authors have investigated the bounds
liminf and limsup in \eqref{gkspeerLIM} and 
how the region between these bounds was attained by the
continous fractal functions $\psi_{p,j}(x)$: 
Coquet \cite{coquet}, Dumont
\cite{dumont}, Drmota and Skalba \cite{drmotaskalba1} \cite{drmotaskalba2},  
Grabner \cite{grabner1}, Grabner, Herendi and Tichy \cite{grabnerherenditichy}.  
It appears that it is a non-trivial problem to decide whether the continuous
function $\psi_{p,j}(x)$ has a zero or not. 
The only known examples where $\psi_{p,0}(x)$ has no zero
are $p = 3^k 5^l$ (\cite{grabner1}) and
$p = 17$ (\cite{grabnerherenditichy}).
Dumont \cite{dumont} has shown that
$\psi_{3,0}(x)$ and $\psi_{3,1}(x)$ have no zero, but
that $\psi_{3,2}(x)$ has a zero.
By the same method he proved that
for all prime numbers $p \equiv 3 ~($mod ~$4) $ 
and $j \in \{0, 1, \ldots, p-1\}$ such that 
the order of $2$ in the multiplicative group
$\bigl(\mathbb{Z}/p \mathbb{Z}\bigr)^{*}$ is $(p-1)/2$, then 
$$\psi_{p,j}(x) ~\mbox{has a zero}.$$  
Let us observe \cite{drmotaskalba2} that the assertion
that $\psi_{p,j}(x)$ has no zero is more or less equivalent to
the Newman-type inequality
$$S_{p,j}(n) > 0 \quad \mbox{for almost all}~ n ~(\mbox{or} ~~S_{p,j}(n) < 0 \quad \mbox{for almost all}~ n)$$ 
where ``almost all" means ``all but finitely many". In \cite{drmotaskalba2}
Drmota and Skalba prove (here $p$ denotes an arbitrary integer $\geq 3$ and $N \geq 1$ an integer)
$$p ~\mbox{divisible by} ~3 ~\mbox{or}~ p= 4^N + 1 
~\Longrightarrow~ S_{p,j}(n) > 0 \quad \mbox{for almost all}~ n.$$
They show that the only prime numbers $p \leq 1000$ which satisfy 
$S_{p,j}(n) > 0$ for almost all $n$ are
$p = 3, 5, 17, 43, 257, 683$ and give an asymptotic formula for their
distribution at infinity.

We now 
mention some results on the asymptotics of the sums
$S_{p, i}(n)$
when $p \geq 3$ is not a prime number.

\begin{proposition}
\label{grabneruno}
Let $1 \leq r_1, r_2$ be integers. 
The asymptotic behaviour  of 
the sum
$S_{p, 0}(n)$
with $p = 3^{r_1} 5^{r_2}$  
is given by
\begin{equation}
\label{grab35}
S_{p, 0}(p \, N) ~=~ \frac{1}{5^{r_2}} S_{3^{r_1}, 0}(p \, N) +
\frac{1}{3^{r_1}} S_{5^{r_2}, 0}(p \, N) ~+~ O(\log N)\, , 
\qquad N \geq 1,
\end{equation}
where
$$
S_{3^{r_1}, 0}(p \, N) ~=~ \frac{1}{3^{r_1} - 1} \, (p \, N)^{\alpha} \, F_{0}\bigl(\log_4 (p \, N/3) \bigr)
+
\left(\frac{p \, N}{3^{r_1}}\right)^{\alpha / 3} F_{1}\bigl(\frac{1}{3} \log_4 (\frac{p \, N}{3^{r_1}}) \bigr)
+  \ldots $$
\begin{equation}
+ \left(\frac{p \, N}{3^{r_1}}\right)^{\alpha / (3^{r_1 - 1})} F_{r_1 - 1}\bigl(\frac{1}{3^{r_1 - 1}} \log_4 (\frac{p \, N}{3^{r_1}}) \bigr)     
+ \frac{\epsilon_{3^{r_1}}\left(p \, N/ 3^{r_1}\right)}{3^{r_1}},
\end{equation}
$$
S_{5^{r_2}, 0}(p \, N) ~=~ \frac{1}{5^{r_2} - 1} \, (p \, N)^{\beta} \, G_{0}\bigl(\log_{16} (p \, N/5) \bigr)
+
\left(\frac{p \, N}{5^{r_2}}\right)^{\beta / 5} G_{1}\bigl(\frac{1}{5} \log_{16} (\frac{p \, N}{5^{r_2}}) \bigr)
+  \ldots $$
\begin{equation}
+ \left(\frac{p \, N}{5^{r_2}}\right)^{\beta / (5^{r_2 - 1})} G_{r_2 - 1}\bigl(\frac{1}{5^{r_2 - 1}} \log_{16} (\frac{p \, N}{5^{r_2}}) \bigr)
+ \frac{
\epsilon_{5^{r_2}}\left(p \, N/ 5^{r_2}\right) }{5^{r_2}},
\end{equation}
where $F_0, F_1, \ldots, F_{r_1 - 1}, G_0, G_1, \ldots, G_{r_2 -1}$ are continuous nowhere differentiable
functions 
of period $1$,
with $\epsilon_{3^{r_1}}\left(p \, N/ 3^{r_1}\right),
\epsilon_{5^{r_2}}\left(p \, N/ 5^{r_2}\right) \in   
\{0, \pm 1\}$,
and
$$\alpha = \frac{\log 3}{2 \log 2}, \qquad \quad \beta = \frac{\log 5}{4 \log 2}.$$
\end{proposition}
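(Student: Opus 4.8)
The plan is to pass from the arithmetic-progression sum to exponential sums over the Thue-Morse sequence by the additive-character (Chinese Remainder) device, and then to sort the resulting frequencies according to whether their reduced denominator is a power of $3$, a power of $5$, or genuinely divisible by both. Write $T_n(\theta) := \sum_{0 \le j < n} \eta_j e^{2\pi i j \theta}$ and use that the indicator of $j \equiv 0$ modulo $d$ equals $d^{-1}\sum_{h=0}^{d-1} e^{2\pi i h j/d}$. Since $\mathbb{Z}/p\mathbb{Z} \cong \mathbb{Z}/3^{r_1}\mathbb{Z} \times \mathbb{Z}/5^{r_2}\mathbb{Z}$, this gives
\[
S_{p,0}(n) = \frac{1}{p}\sum_{a=0}^{3^{r_1}-1}\sum_{b=0}^{5^{r_2}-1} T_n\Bigl(\frac{a}{3^{r_1}}+\frac{b}{5^{r_2}}\Bigr).
\]
Applying the same identity to each prime-power modulus shows that the block $b=0$ reproduces $\frac{1}{5^{r_2}}S_{3^{r_1},0}(n)$ and the block $a=0$ reproduces $\frac{1}{3^{r_1}}S_{5^{r_2},0}(n)$, with the corner $a=b=0$ counted twice.

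First I would record the resulting exact identity
\[
S_{p,0}(n) - \frac{1}{5^{r_2}}S_{3^{r_1},0}(n) - \frac{1}{3^{r_1}}S_{5^{r_2},0}(n) = \frac{1}{p}\Bigl(\sum_{a\neq 0,\, b\neq 0} T_n\bigl(\tfrac{a}{3^{r_1}}+\tfrac{b}{5^{r_2}}\bigr) - T_n(0)\Bigr).
\]
The term $T_n(0)=\sum_{j<n}\eta_j$ lies in $\{-1,0,1\}$ because $\eta_{2m}+\eta_{2m+1}=0$, hence contributes $O(1)$. The core of the argument is to bound the ``mixed'' frequencies at $n=pN$. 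For $a,b\neq 0$ the reduced denominator $d$ of $\theta=a/3^{r_1}+b/5^{r_2}$ is divisible by both $3$ and $5$, so $d$ is odd, the doubling orbit of $\theta$ is purely periodic with period $s=\mathrm{ord}_d(2)$, and the growth of $|T_{2^{sk}}(\theta)|=\prod|2\sin(\pi 2^l\theta)|$ is governed by the Goldstein--Kelly--Speer eigenvalues $\xi_a$ of \eqref{ptop}. The key estimate is that for such mixed denominators every coset product satisfies $2^{s}\prod_{j\in a\langle 2\rangle}|\sin(2\pi j/d)|\le 1$, with equality on the dominant cosets; equivalently the associated transfer matrix has spectral radius $\le 1$ with bounded powers. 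Expanding $n=pN$ in base $2^{s}$ then writes $T_{pN}(\theta)$ as a sum of $O(\log N)$ products of such matrices, each of size $O(1)$, so $T_{pN}(\theta)=O(\log N)$; summing the finitely many mixed frequencies yields the announced error.

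It then remains only to insert the explicit fractal shapes of the two prime-power pieces, which are Grabner's refinements \cite{grabner1} of the Coquet expansion. Iterating the self-similarity $\mathbf{F}(2^s x)=M\,\mathbf{F}(x)$ of \eqref{equaapproxF} along the divisor tower $3\mid 3^2\mid\cdots\mid 3^{r_1}$ (respectively $5\mid\cdots\mid 5^{r_2}$) produces the chain of continuous nowhere differentiable period-$1$ functions $F_0,\dots,F_{r_1-1}$ (respectively $G_0,\dots,G_{r_2-1}$), with the displayed decreasing exponents $\alpha/3^{i}$ (respectively $\beta/5^{i}$) and the $\{0,\pm1\}$-valued remainders $\epsilon_{3^{r_1}},\epsilon_{5^{r_2}}$; substituting $n=pN$ gives the stated formulas for $S_{3^{r_1},0}(pN)$ and $S_{5^{r_2},0}(pN)$.

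The main obstacle is precisely the mixed-frequency estimate: proving $2^{s}\prod_{j\in a\langle 2\rangle}|\sin(2\pi j/d)|\le 1$ for every $d$ divisible by $15$, i.e. that the Thue-Morse exponential sum has spectral radius one along frequencies whose denominator carries both primes. This is what forces the coupling term to be of size $O(\log N)$ rather than polynomial; it is the special arithmetic of the pair $\{3,5\}$ --- the multiplicative orders of $2$ being $2$ and $4$, so that the product of sines collapses to an exact power of $\tfrac12$ --- that makes this collapse occur, and the logarithm itself is produced by the base-$2^{s}$ digit expansion of $n=pN$ against a spectral-radius-one transfer operator.
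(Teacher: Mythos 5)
The paper gives no argument for this Proposition at all: its ``proof'' is the bare citation to \cite{grabner1}, pp.~40--41 (where the result is obtained via Dirichlet series and Mellin--Perron summation, cf.\ \cite{grabner2}). Your reconstruction is therefore a genuinely different, self-contained and more elementary route, and its skeleton is correct: the orthogonality/CRT identity $S_{p,0}(n)=\frac1p\sum_{a,b}T_n\bigl(\frac{a}{3^{r_1}}+\frac{b}{5^{r_2}}\bigr)$, the identification of the $b=0$ and $a=0$ blocks with $\frac{1}{5^{r_2}}S_{3^{r_1},0}(n)$ and $\frac{1}{3^{r_1}}S_{5^{r_2},0}(n)$ (minus the doubly counted corner $T_n(0)\in\{-1,0,1\}$), and the conversion of ``period product $\le 1$'' into $T_n(\theta)=O(\log n)$ via the binary block decomposition of $[0,n)$ are all sound. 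What your approach buys is an explicit, purely arithmetic explanation of why the coupling between the two prime powers is only logarithmic; what it costs is that you must control the coset products of sines by hand --- exactly the step you flag as ``the main obstacle'' and do not prove.

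That step is the one genuine gap, but it closes in a few lines and you should close it rather than leave it as an obstacle. For a mixed denominator $d=3^u5^v$ ($u,v\ge1$) one has $\mathrm{ord}_d(2)=\mathrm{lcm}(2\cdot3^{u-1},\,4\cdot5^{v-1})=4\cdot3^{u-1}5^{v-1}=\phi(d)/2$, so $\langle 2\rangle$ has index exactly $2$ in $(\mathbb{Z}/d\mathbb{Z})^{*}$; and $-1\notin\langle2\rangle$, because $2^k\equiv-1\pmod{3^u}$ forces $k$ to be an odd multiple of $3^{u-1}$ (hence odd) while $2^k\equiv-1\pmod{5^v}$ forces $k$ to be an odd multiple of $2\cdot5^{v-1}$ (hence even). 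Thus the two cosets are $\langle2\rangle$ and $-\langle2\rangle$, exchanged by $j\mapsto d-j$, which preserves $|\sin(\pi j/d)|$, so the two coset products are equal; their product is $\prod_{j\in(\mathbb{Z}/d\mathbb{Z})^{*}}2|\sin(\pi j/d)|=|\Phi_d(1)|=1$ since $d$ has two distinct prime factors. Hence every coset product equals exactly $1$, which is what your $O(\log N)$ bound needs. Note that this also corrects your closing heuristic: $\Phi_d(1)=1$ holds for \emph{any} $d$ with at least two prime divisors, so that is not where $\{3,5\}$ enters; what is special is that $2$ is a primitive root modulo every $3^u$ and every $5^v$, forcing the index to be $2$ so that the negation symmetry pins each coset product to $1$ --- for other prime pairs the index can exceed $2$ and an individual coset product can exceed $1$, which would destroy the logarithmic error term. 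Finally, you import the explicit prime-power expansions from \cite{grabner1} without proof, exactly as the paper does; this is consistent with your framework, since the displayed exponents are $\log 3/(\mathrm{ord}_{3^{i+1}}(2)\log 2)=\alpha/3^{i}$ and $\log 5/(\mathrm{ord}_{5^{i+1}}(2)\log 2)=\beta/5^{i}$, coming from $\Phi_{3^{i+1}}(1)=3$ and $\Phi_{5^{i+1}}(1)=5$ applied to the frequencies of each reduced denominator in the tower.
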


\begin{proof}
\cite{grabner1} pp. 40--41.
\end{proof}

\begin{remark}
\label{grabnerremark}
Since $\alpha = \frac{\log 3}{2 \log 2} > \beta = \frac{\log 5}{4 \log 2}$ and
that $\frac{\log 3}{2 \log 2}$ is the scaling exponent relative to
$S_{3, 0}(3 \, N)$, Proposition \ref{grabneruno} shows that
the scaling behaviour
of $S_{p, 0}(p \, N)$ with $p = 3^{r_1} 5^{r_2}$ is dominated 
by 
that of $S_{3, 0}(3 \, N)$, which is 
the rarefied sum of the Thue-Morse sequence relative to the prime number $3$; 
in this respect this prime number $3$ can be qualified as ``dominant"
in the prime factor decomposition of
$p = 3^{r_1} 5^{r_2}$.
Hence Proposition \ref{grabneruno} tends to show  
in general that the computation of the dominant scaling exponent   
of $S_{p, i}(p \, N)$, with $p = p_{1}^{i_1} p_{2}^{i_2} \ldots p_{m}^{i_m}$, 
its prime factor decomposition (with  $p_1 \neq 2$), $ 0 \leq i \leq p-1$,
amounts to
computing the scaling exponent relative to only one sum $S_{p_{j}, i}(n)$
where $p_j$ is the ``dominant" prime number in
the decomposition $p = p_{1}^{i_1} p_{2}^{i_2} \ldots p_{m}^{i_m}$.
General results are still missing.

\end{remark}

\section{The singular continous component}
\label{S5}

\subsection{Main Theorem}
\label{S5.1}

\begin{theorem}
\label{singgg}
The singular continous component
of the spectrum of
the Thue-Morse quasicrystal given
by $\mu = \sum_{n \geq 1} \delta_{f(n)}$
on $\Lambda_{a,b}$, 
is the set
$$\mathbb{S}:=\Bigl\{ \frac{4 \pi}{a+b} \frac{t}{2^h \, p} \mid t \in \mathbb{Z}, 
\, p ~\mbox{odd integer}~ \geq 3, \, h \geq 0, \, \mbox{gcd}(t,2^h p) = 1,\Bigr.$$
$$\mbox{} \hspace{8cm} \Bigl. \kappa_{\eta}\Bigl(
\frac{4 \pi}{a+b} \frac{t}{2^h \, p}\Bigr) \neq 0 \Bigr\}$$  
up to a subset of measure zero of 
$\mathbb{R} \setminus \frac{4 \pi}{a+b} \mathbb{Q}$.
\end{theorem}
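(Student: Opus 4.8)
The plan is to read membership in the singular continuous part through the (AGL)-argument of Definition \ref{argumentsBT+AGL}, and to reduce every scaling estimate to the asymptotics of the rarefied sums $S_{p,j}$ by means of the key identity \eqref{qrat}. I would prove the asserted equality of sets by two inclusions.

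First I would eliminate almost all of $\mathbb{R}\setminus\frac{4\pi}{a+b}\mathbb{Q}$. By Corollary \ref{singmesNULLE} the singular continuous component is Lebesgue-null, and by Proposition \ref{qnonrat} the exponent $\alpha_{2^n}(k)$ of \eqref{alphFUNC} converges to $-1$ for almost every $k$. Since the scaling of $\nu_{\mathcal{U},l}(k)$ is, by Subsection \ref{S3.4}, governed by the exponent $\alpha=2\beta-1$ attached to the dominant $\eta$-summation \eqref{basicsum}, such generic $k$ carry the limiting exponent $-1$ and fail the (AGL)-criterion \eqref{sing1AGL}--\eqref{sing2AGL}, which demands $\alpha\in(-1,1)$. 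This already confines the singular part to $\frac{4\pi}{a+b}\mathbb{Q}$ up to the null subset of $\mathbb{R}\setminus\frac{4\pi}{a+b}\mathbb{Q}$ tolerated in the statement.

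On $\frac{4\pi}{a+b}\mathbb{Q}$ the analysis is explicit. Writing $k=\frac{4\pi}{a+b}q$ with $q=t/P$ in lowest terms, Theorem \ref{braggqc} removes the Bragg directions, which are exactly those with $P$ a power of $2$; the remaining denominators are $P=2^h p$ with $p$ odd and $\geq 3$, precisely the shape occurring in $\mathbb{S}$. For such $k$ the identity \eqref{qrat}, used along the averaging lengths $l=NP+1$, expresses the dominant summation \eqref{basicsum} through the $P$-rarefied sums $S_{P,j}(NP)$, and the Goldstein--Kelly--Speer formulas \eqref{gkspeer}--\eqref{gkspeerLIM}, extended to the composite modulus by Grabner (Proposition \ref{grabneruno} and Remark \ref{grabnerremark}), give $S_{P,j}(NP)\asymp (NP)^{\beta}$ with $\beta=\frac{\log\lambda_1}{s\log 2}$. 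The inequalities $1<\lambda_1<2^s$ force $\beta\in(0,1)$, hence $\alpha=2\beta-1\in(-1,1)$, so $\nu_{\mathcal{U},l}(k)\asymp\mathrm{card}(U_l)^{\alpha}$ and the (AGL)-criterion is met: $k$ lies in the singular continuous part. The condition $\kappa_\eta(k)\neq 0$ enters precisely here, since by \eqref{basedecomp1} the only term of $\widehat\mu(k)$ carrying this singular scaling is the $\eta$-summation weighted by $\kappa_\eta(k)$; when $\kappa_\eta(k)=0$ the dominant contribution is extinguished and $k$ leaves the spectrum, which is why the defining condition of $\mathbb{S}$ retains $\kappa_\eta(k)\neq 0$. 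Running this computation in reverse gives the converse inclusion: any singular $k$ is, after the first reduction, of rational type with non-power-of-$2$ denominator and $\kappa_\eta(k)\neq 0$, hence belongs to $\mathbb{S}$.

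The step I expect to be the main obstacle is the passage through the full denominator $P=2^h p$. The Goldstein--Kelly--Speer and Grabner descriptions are formulated for odd moduli, so I must show that the factor $2^h$ leaves the dominant scaling exponent untouched, the odd part $p$ staying ``dominant'' in the sense of Remark \ref{grabnerremark}. I would deduce this from the self-similarity $\eta_{2n}=\eta_n$, $\eta_{2n+1}=-\eta_n$, folding the $(2^h p)$-rarefied sums onto $p$-rarefied sums without altering $\beta$. A second delicate point, forced by the quantifier over all $\mathcal{U}$ in \eqref{sing1AGL}--\eqref{sing2AGL}, is to transfer the exponent from the distinguished lengths $l=NP+1$ to arbitrary averaging sequences; as in the proof of Corollary \ref{singmesNULLE}, this is controlled by the block structure of the Thue--Morse sequence.
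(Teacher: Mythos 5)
Your proposal is correct and follows essentially the same route as the paper: reduction to $\frac{4\pi}{a+b}\mathbb{Q}$ via Corollary \ref{singmesNULLE}, the key identity \eqref{qrat} along lengths $l=Np+1$, the Goldstein--Kelly--Speer asymptotics giving $\alpha=2\beta-1\in(-1,1)$ from $1<\lambda_1<2^s$, the same caveat for composite odd $p$, and the same role for $\kappa_\eta(k)\neq 0$. The step you flag as the main obstacle --- peeling off the factor $2^h$ --- is handled in the paper exactly as you anticipate, via the Riesz-product factorization \eqref{reducenmoinsh} at dyadic lengths $l=2^n$, which splits off the finite nonzero product $2^h\prod_{j=0}^{h-1}\sin^2(\pi 2^j t/(2^h p))$ and reduces case (ii) to the odd-denominator case (i).
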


\begin{proof}
We will only consider that $k \in \frac{4 \pi}{a+b} \mathbb{Q}$ 
and leave apart a possible subset of measure zero in
$\mathbb{R} \setminus \frac{4 \pi}{a+b} \mathbb{Q}$
by Corollary \ref{singmesNULLE}.
There are two cases: either (i) $k=
\frac{4 \pi}{a+b} \frac{t}{p}$ with $t \in \mathbb{Z}$,
$p \geq 3$ an odd integer, gcd$(t,p) =1$, or
(ii) $k=
\frac{4 \pi}{a+b} \frac{t}{2^h p}$ with $t \in \mathbb{Z}$,
$p \geq 3$ an odd integer, $h \geq 1$, gcd$(t,2^h p) =1$.

(i) Let us consider the first case.
Let us assume first that $p$ is an odd prime number.
 
\begin{definition}
\label{rarefactionpolyg}
%Let $k = \frac{4 \pi}{a+b} \frac{t}{p} \in \mathbb{S}$
%and 
Let $\xi_{k} := e^{- 2 i \pi \frac{t}{p}}$.
The
compact subset of the complex plane
\begin{equation}
\label{rarefdom}
\mbox{R}(k) := \Bigl\{\, \sum_{j=0}^{p-1} \, y_j \, \xi_{k}^{j} \, \Bigr\}
\end{equation}
where $ y_j$ runs over the closed interval 
$\left[
\inf_{x \in [0,1]} \psi_{p,j}(x), \sup_{x \in [0,1]} \psi_{p,j}(x)  
\right]
$, 
for $j = 0, 1, \ldots, p-1$,
is called the {\em rarefaction domain} 
at (the wave vector) $k$.
\end{definition}
The rarefaction domain R$(k)$ is obviously facetted and
may be a polygon. It may or may not contain the origin,
or (facetted) holes.
It belongs to the linear span of the powers of
the root of unity $\xi_k$ 
and inherits the properties of the fractal functions $\psi_{p,j}(x)$,
$j=0, 1, \ldots, p-1$, which are bounded. 
It is not reduced to a single point since,
for all $j=0,1, \ldots, p-1$, the two bounds
$\inf_{x \in [0,1]} \psi_{p,j}(x)$  and
$\sup_{x \in [0,1]} \psi_{p,j}(x)$ are distinct by construction.

We now show that the (AGL)-argument can be invoked, with 
suitable values of $\alpha=\alpha(k)$, for all such elements $k$. 
For $\mathcal{U}$ the canonical
sequence \eqref{canon}, recall 
the expression of the $l$-th approximant measure, with $l \geq 1$, 
$$\nu_{\mathcal{U},l}(k) dk
~=
\frac{1}{l}
\,
\left|
\Bigl(\kappa\bigl(\frac{4 \pi}{a+b} \frac{t}{p}\bigr)+\kappa_{\eta}\bigl(\frac{4 \pi}{a+b} \frac{t}{p}\bigr)
\Bigr)
\Bigl(
\sum_{n= 1}^{l}
\, e^{-  2 i \pi (n - \frac{a-b}{a+b}) \frac{t}{p}
} \Bigr)
\right.
$$
\begin{equation}
\label{nuapproxlll}
\left.
- 2
\kappa_{\eta}\bigl(\frac{4 \pi}{a+b} \frac{t}{p} \bigr)
\Bigl(
\sum_{n= 1}^{l}
\, \frac{1+\eta_{n-1}}{2} e^{-  2 i \pi (n - \frac{a-b}{a+b})\frac{t}{p} 
}
\Bigr)
\right|^2 \, dk.
\end{equation}
The rarefaction domain R$(k)$ is related to the
$l$-th approximant measure
$\nu_{\mathcal{U},l}(k) dk$, for  
$l = N p +1$ and $N \geq 1$ a positive integer,
as follows:
by \eqref{qrat}, \eqref{gkspeer} and
\eqref{rarefdom} we have 
$$\frac{1}{N p + 1}
\left| \sum_{n=1}^{N p + 1} \, \eta_{n-1} e^{ - 2 i  \pi n \frac{t}{p} }
\right|^2
~=~
\frac{1}{N p + 1}
\left| \sum_{j=0}^{p-1} S_{p,j}(N p) e^{ - 2 i  \pi \frac{j t}{p} }
\right|^2 .
$$
\begin{equation}
\label{qratnew}
~=~ 
\frac{1}{N p + 1} \bigl( N p \bigr)^{2 \beta}
\Bigl| z + \mathcal{O}\left( \bigl( N p \bigr)^{\beta_1 - \beta} \right) \Bigr|^2
\end{equation}
where $z \in $\, R$(k)$, $s \geq 1$ is the smallest integer
such that $2^s \equiv 1 \,($mod~$ p)$, where $\lambda_1$
and $\lambda_2 ~(\, < \lambda_1)$ are the first and second 
magnitudes of eigenvalues
of the matrix $M$ (see Section \ref{S4}), 
where $\beta = \frac{\log \lambda_1}{s \log 2}$
and where $\beta_1 = \frac{\log \lambda_2}{s \log 2}$ if $\lambda_2 > 1$, and
$\beta_1 = 0$ if $\lambda_2 < 1$. 
These quantities are relative to the $p$-rarefied 
sums of the Thue-Morse sequence.
Since $k$ does not belong to the Bragg component
$\mathcal{B}_{a,b}^{(\mu)}$
the second summation in \eqref{nuapproxlll}
is dominant in the asymptotic
behaviour of $\nu_{\mathcal{U},l}(k) dk$.
Therefore
\begin{equation}
\label{aglfinal}
0 \leq \liminf_{N \to +\infty} \frac{\nu_{\mathcal{U},N p + 1}(k) dk}{\mbox{card}(
U_{N p + 1}
)^{\alpha} dk}
<
\limsup_{N \to +\infty} \frac{\nu_{\mathcal{U},N p + 1}(k) dk}{\mbox{card}(
U_{N p + 1}
)^{\alpha} dk}
< +\infty
\end{equation}
with 
\begin{equation}
\label{alphamieux}
\alpha ~=~ 2 \beta - 1 ~=~ 2 \, \frac{\log \lambda_1}{s \log 2} - 1, 
\end{equation}
and
\begin{equation}
\label{mieux1}
\liminf_{N \to +\infty} \frac{\nu_{\mathcal{U},N p + 1}(k) dk}{\mbox{card}(
U_{N p + 1}
)^{\alpha} dk}
~=~
\left|\kappa_{\eta}\Bigl(\frac{4 \pi}{a+b} \frac{t}{p}\Bigr)\right|^2
\times \inf_{z \in \mbox{{\scriptsize R}}(k)} \, |z|^2 \, , 
\end{equation}
\begin{equation}
\label{mieux2}
\limsup_{N \to +\infty} \frac{\nu_{\mathcal{U},N p + 1}(k) dk}{\mbox{card}(
U_{N p + 1}
)^{\alpha} dk}
~=~
\left|\kappa_{\eta}\Bigl(\frac{4 \pi}{a+b} \frac{t}{p}\Bigr)\right|^2
\times \sup_{z \in \mbox{{\scriptsize R}}(k)} \, |z|^2 \, > 0.
\end{equation} 
It is easy to check that \eqref{aglfinal} 
holds for any other subsequence
of $\mathcal{U}$ with the same exponent $\alpha = 2 \beta -1$.
Let us notice that $1 < \lambda_1 < 2^s$ by 
\eqref{ptop}, which implies that $-1 < \alpha < 1$. Then we deduce the claim  
from the (AGL)-argument.    
\newline

General case: if now $p \geq 3$ is an odd integer, Cheng, Savit and Merlin 
(\cite{chengsavitmerlin} II.B.4) have shown that a scaling exponent
always exists. By Remark \ref{grabnerremark} this scaling exponent 
seems to be
equal, in the rarefaction phenomenon, 
to the scaling exponent of the Thue-Morse rarefied sequence 
relative to  the ``dominant" prime number in the prime number decomposition
of $p$. However, no general result is known to the authors on this subject, except the case
$p= 3^{r_1} 5^{r_2}$ with $r_1, r_2 \geq 1$ (i.e. Proposition \ref{grabneruno}
due to Grabner \cite{grabner1}).
We now proceed as in the previous case.
\newline

(ii) the second case can be deduced from the first one (i) 
as follows, now with $l = 2^n$, $n \geq 1$ : 
$$\frac{1}{2^n}
\left| \sum_{j=1}^{2^n} \, \eta_{j-1} e^{ - 2 i  \pi j \frac{t}{2^h p} }
\right|^2
~=~
2^{n} \, \prod_{j=0}^{n-1} \, \sin^{2} (\pi 2^j \frac{t}{2^h p})
$$
$$~=~
2^{h} \, \prod_{j=0}^{h-1} \, \sin^{2} (\pi 2^j \frac{t}{2^h p}) \times
2^{n-h} \, \prod_{j=0}^{n-h-1} \, \sin^{2} (\pi 2^j \frac{t}{p})$$
\begin{equation}
\label{reducenmoinsh}
=~
2^{h} \, \prod_{j=0}^{h-1} \, \sin^{2} (\pi 2^j \frac{t}{2^h p}) \times
\frac{1}{2^{n-h}}
\left| \sum_{j=1}^{2^{n-h}} \, \eta_{j-1} e^{ - 2 i  \pi j \frac{t}{p} }
\right|^2.  
\end{equation}
We have $\prod_{j=0}^{h-1} \, \sin^{2} (\pi 2^j \frac{t}{2^h p}) \neq 0$.  
Then, by \eqref{reducenmoinsh},  
we can apply the asymptotic 
laws relative to the $p$-rarefied sums of the Thue-morse sequence as  in (i).
Therefore
\begin{equation}
\label{aglfinalii}
0 \leq \liminf_{n \to +\infty} \frac{\nu_{\mathcal{U},2^n}(k) dk}{\mbox{card}(
U_{2^n}
)^{\alpha} dk}
<
\limsup_{n \to +\infty} \frac{\nu_{\mathcal{U},2^n}(k) dk}{\mbox{card}(
U_{2^n}
)^{\alpha} dk}
< +\infty
\end{equation}
with
\begin{equation}
\label{alphamieuxii}
\alpha ~=~ \alpha(k) ~=~ \alpha(2^h k) ~=~ \alpha(\frac{4 \pi}{a+b} \frac{t}{p})
~=~ 2 \beta - 1 ~=~ 2 \, \frac{\log \lambda_1}{s \log 2} - 1,   
\end{equation}
 and
\begin{equation}
\label{mieux1ii}
\liminf_{n \to +\infty} \frac{\nu_{\mathcal{U},2^n}(k) dk}{\mbox{card}(
U_{2^n}
)^{\alpha} dk}
=
\left|\kappa_{\eta}(k)\right|^2  \times  
2^{h} \, \prod_{j=0}^{h-1} \, \sin^{2} (\pi 2^j \frac{t}{2^h p})
\times \inf_{z \in \mbox{{\scriptsize R}}(2^h k)} \, |z|^2 \, ,
\end{equation}
\begin{equation}
\label{mieux2ii}
\limsup_{n \to +\infty} \frac{\nu_{\mathcal{U},2^n}(k) dk}{\mbox{card}(
U_{2^n}
)^{\alpha} dk}
=
\left|\kappa_{\eta}(k)\right|^2 \times 
2^{h} \, \prod_{j=0}^{h-1} \, \sin^{2} (\pi 2^j \frac{t}{2^h p})  
\times \sup_{z \in \mbox{{\scriptsize R}}(2^h k)} \, |z|^2.
\end{equation}
The inequalities \eqref{aglfinalii}
hold for any other subsequence
of $\mathcal{U}$ with the same exponent $\alpha = 2 \beta -1$.
The (AGL)-argument is now invoked and gives the claim.
\end{proof}

\subsection{Extinction properties}
\label{S5.2}

We say that a subsequence
$\mathcal{V} = (V_i)_{i \geq 0}$ with
$V_i = [0, f(N_i)] \cap \bigl(\Lambda_{a,b} \setminus \{0\}\bigr)$,
of the canonical sequence $\mathcal{U}$ \eqref{canon},
has the extinction property at 
$k = \frac{4 \pi}{a+b} \frac{t}{p} \in \mathbb{S}$ 
if 
$$\lim_{i \to +\infty} \, \frac{\nu_{\mathcal{V},i}(k) dk}{\mbox{card}(
V_{i}
)^{\alpha} dk}~=~ 0$$
for the exponent $\alpha$ which corresponds 
uniquely to $k$ in the (AGL)-argument.

\begin{proposition}
\label{soussuiteannul}
Let $k = \frac{4 \pi}{a+b} \frac{t}{p} \in \mathbb{S}$ with 
$p$ odd. 
If the origin $0$ is not contained in the 
rarefaction domain R$(k)$, then there exists no sequence
of finite approximants $\mathcal{V}$ in
$\mathcal{A}_{a,b}^{supp(\omega)}$ which possesses the extinction property
at $k$.
\end{proposition}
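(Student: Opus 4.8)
The plan is to refute extinction along \emph{every} subsequence at once, by producing a strictly positive lower bound for the normalized approximant measure that is uniform in the index. First I would fix an arbitrary subsequence $\mathcal{V}=(V_i)_{i\ge 0}$ of the canonical sequence, so that $V_i=U_{N_i}$ and $\mathrm{card}(V_i)=N_i$, and record that the exponent attached to $k$ by the (AGL)-argument is $\alpha=2\beta-1$, as in \eqref{alphamieux}. Since $k=\frac{4\pi}{a+b}\frac tp\in\mathbb S$ does not lie in the Bragg component $\mathcal{B}_{a,b}^{(\mu)}$, the lattice summation in \eqref{nuapproxlll} stays bounded, so, exactly as in the proof of Theorem~\ref{singgg}, the asymptotics of $\nu_{\mathcal V,i}(k)\,dk/\mathrm{card}(V_i)^\alpha\,dk$ are controlled by the $\eta$-summation alone.

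Next I would make this dominant term explicit for an arbitrary index $N_i$, not just for indices of the shape $Np+1$. By Lemma~\ref{3possib} the exponential $e^{-2i\pi nt/p}$ depends on $n$ only modulo $p$, which gives the exact identity
$$\Bigl|\sum_{n=1}^{N_i}\eta_{n-1}e^{-2i\pi n t/p}\Bigr|^2=\Bigl|\sum_{j=0}^{p-1}S_{p,j}(N_i)\,\xi_k^{\,j}\Bigr|^2 .$$
Inserting the scaling law \eqref{gkspeer} for each rarefied sum and using $\beta_1<\beta$, I obtain
$$\frac{1}{N_i^{2\beta}}\Bigl|\sum_{n=1}^{N_i}\eta_{n-1}e^{-2i\pi n t/p}\Bigr|^2=\bigl|z_{N_i}\bigr|^2+o(1),\qquad z_{N_i}:=\sum_{j=0}^{p-1}\psi_{p,j}\!\Bigl(\tfrac{\log N_i}{r\,s\,\log 2}\Bigr)\xi_k^{\,j}.$$
Because each $\psi_{p,j}$ takes values in $[\inf_{[0,1]}\psi_{p,j},\sup_{[0,1]}\psi_{p,j}]$, the point $z_{N_i}$ belongs to the rarefaction domain $\mathrm R(k)$ for every $i$, by Definition~\ref{rarefactionpolyg}. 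Since $1+\alpha=2\beta$, this yields
$$\frac{\nu_{\mathcal V,i}(k)\,dk}{\mathrm{card}(V_i)^\alpha\,dk}=\bigl|\kappa_\eta(k)\bigr|^2\,|z_{N_i}|^2+o(1),\qquad i\to+\infty,\quad z_{N_i}\in\mathrm R(k).$$

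Finally I would invoke compactness. As $\mathrm R(k)$ is compact and $0\notin\mathrm R(k)$, the number $d:=\inf_{z\in\mathrm R(k)}|z|$ is strictly positive, so $|z_{N_i}|\ge d$ for all $i$; moreover $k\in\mathbb S$ forces $\kappa_\eta(k)\ne 0$. The last display then gives
$$\liminf_{i\to+\infty}\frac{\nu_{\mathcal V,i}(k)\,dk}{\mathrm{card}(V_i)^\alpha\,dk}\ \ge\ |\kappa_\eta(k)|^2\,d^2\ >\ 0,$$
so the limit defining the extinction property cannot be $0$. As $\mathcal V$ was arbitrary, no sequence of finite approximants in $\mathcal A^{\mathrm{supp}(\omega)}_{a,b}$ possesses the extinction property at $k$, which is the assertion.

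The step I expect to be delicate is the second one. The scaling law \eqref{gkspeer} is established rigorously only for $p$ an odd prime, whereas the statement allows an arbitrary odd $p\ge 3$; for composite $p$ one must instead rely on the existence of a scaling exponent in the sense of Cheng--Savit--Merlin and define $z_{N_i}$ through the corresponding fractal functions, checking that the remainder is genuinely $o(1)$ (uniformly in $j$) as $N_i\to+\infty$. Everything else—the bounded lattice contribution, the cross term, and the membership $z_{N_i}\in\mathrm R(k)$—is routine once this asymptotic is in force.
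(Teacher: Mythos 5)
Your argument is correct and follows essentially the same route as the paper's: the published proof is a two-line contradiction that simply invokes the identity $\liminf_{i}\nu_{\mathcal V,i}(k)dk/\mathrm{card}(V_i)^{\alpha}dk=|\kappa_{\eta}(k)|^2\inf_{z\in \mathrm{R}(k)}|z|^2$ from \eqref{mieux1}, together with $0\notin \mathrm{R}(k)$ and $\kappa_{\eta}(k)\neq 0$. Your write-up merely makes explicit the step the paper leaves implicit (cf.\ the remark after \eqref{mieux2}) that the scaling asymptotics hold along an arbitrary subsequence of indices, and your caveat about composite odd $p$ applies equally to the paper's own treatment.
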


\begin{proof}
Assume that such a sequence
of finite approximants $\mathcal{V} = (V_i)_i$ exists. Then
we would have
\begin{equation}
\label{mieux1exti}
\liminf_{i \to +\infty} \frac{\nu_{\mathcal{V},i}(k) dk}{\mbox{card}(
V_{i}
)^{\alpha} dk}
~=~
\lim_{i \to +\infty} \, \frac{\nu_{\mathcal{V},i}(k) dk}{\mbox{card}(
V_{i}
)^{\alpha} dk}~=~ 0
~=~
\left|\kappa_{\eta}\Bigl(\frac{4 \pi}{a+b} \frac{t}{p}\Bigr)\right|^2
\times \inf_{z \in \mbox{{\scriptsize R}}(k)} \, |z|^2 \, . 
\end{equation}
But $\inf_{z \in \mbox{{\scriptsize R}}(k)} \, |z|^2 \neq 0$ and
$\kappa_{\eta}\Bigl(\frac{4 \pi}{a+b} \frac{t}{p}\Bigr) \neq 0$.
Contradiction. 
\end{proof}

Now let $k = \frac{4 \pi}{a+b} \frac{t}{p} \in \mathbb{S}$
and assume $0 \in ~$R$(k)$.
The problem of finding a sequence of finite approximants 
$\mathcal{V} = (V _i)$, with
$V_i = [0, f(N_i \, p)] \cap (\Lambda_{a,b} \setminus \{0\})$,
which has the extinction property  
at $k$ amounts to
exhibit explicitly an increasing sequence $(N_i)$ of positive 
integers which satisfies 
$$\lim_{i \to \infty} \,  \sum_{j=0}^{p-1} \, \psi_{p,j}\left(
\frac{\log (N_i \, p)}{r s \log 2}\right) \, \xi_{k}^{j} ~=~ 0 ,  
$$
by \eqref{gkspeer}, and in particular sequences $(N_i)$
for which
$$\sum_{j=0}^{p-1} \, \psi_{p,j}\left(
\frac{\log (N_i \, p)}{r s \log 2}\right) \, \xi_{k}^{j} ~=~ 0\qquad \mbox{for all}~ i$$
as soon as $i \geq i_0$ for a certain $i_0$.
This problem concerns at the same time all the
$p$ functions $\psi_{p,j}(x)$ (whose individual cancellation properties 
are recalled in Section \ref{S4}) and seems as difficult as finding
particular sequences of integers 
which realize the min and the max of the
fractal functions $\psi_{p,j}(x)$
(see Coquet \cite{coquet} for $p = 3$ and $j=0$).

Let us notice that most of the sequences of finite approximants
have not the extinction property at $k$.

\subsection{Growth regimes of approximant measures and
visibility in the spectrum}  
\label{S5.3}

\begin{definition}
\label{3regimes}
Let  $k = \frac{4 \pi}{a+b} \frac{t}{2^h \, p} \in \mathbb{S}$. 
We say that the sequence of approximant measures 
$\Bigl(\nu_{\mathcal{U},l}(k) dk\Bigr)_{l \geq 0}$
is
\begin{itemize}
\item[(i)]  {\it size-increasing at $k$} : if and only if $\alpha \in (0, 1)$,    
\item[(ii)] {\it \'etale at $k$} : if and only if $\alpha = 0$,  
\item[(iii)] {\it size-decreasing at $k$} : if and only if $\alpha \in (-1,0)$. 
\end{itemize}
\end{definition}  

These three regimes correspond exactly, by \eqref{alphamieux} and 
\eqref{alphamieuxii}, 
to the three cases of exponents
$$1/2 < \beta < 1, ~\beta = 1/2, ~0 < \beta < 1/2$$  
relative to the asymptotic laws of 
the $p$-rarefied sums of the Thue-Morse sequence.

On the other hand, 
it is usual in physics \cite{cowley} \cite{guinier} \cite{luck}
to represent the function 
``intensity per diffracting site"
$I_{l}(k)$ 
(see \eqref{intensityI}) as a function of $k$, for various 
values of sizes $l = ~$card$(U_l)$ of finite approximant point sets of
$\mu$. In this respect, it is of 
common use to increase $l$ in order to
have a more precise understanding of the spectrum. Here,
it is illusory to do so since, 
on the set of $k$s
at which the sequence of approximant
measures is ``\'etale" and ``size-decreasing", 
the functions
$$l ~~\longrightarrow~~ I_{l}(k)$$
tend to zero much more rapidly than 
on the set of $k$s
at which the sequence of approximant
measures is ``size-increasing": it suffices to compare
\eqref{intensityI} and \eqref{approxlnu} to observe this.
This leads to a
unique visibility in the spectrum, when $l$ is large enough, 
of the singular peaks 
at the $k$s for which the sequence of approximant measures
is ``size-increasing", and  
of the Bragg peaks
characterized by the fact that limsup$_{l \to +\infty} I_{l}(k)$
is a nonzero constant, 
by the Bombieri-Taylor Conjecture.

In the numerical study \cite{peyrierecockayneaxel} 
most of the peaks in the spectrum, 
which are not
Bragg peaks, are guessed to be labeled   
using ``$2^h p$" at the denominator of the wave vector 
with the prime number $p = 3$, not with
any other odd prime.  
Therefore it is important to characterize 
the set of odd prime numbers $p$ 
for which 
the $p$-rarefied sums of the Thue-Morse sequence have the property that
the sequences of approximant measures are ``size-increasing" at
$k = \frac{4 \pi}{a+b} \frac{t}{2^h p} \in \mathbb{S}$.  
In Subsection \ref{S5.4}
we ask this general question and solve it 
for some classes of prime numbers, 
using \cite{goldsteinkellyspeer}. 

\subsection{Classes of prime numbers}
\label{S5.4}

Denote by $\mathbb{P} = \{3, 5, \ldots\}$ 
the set of odd prime numbers.  
In the following, we only consider
prime numbers 
$p$ which are odd, so that $2$ is invertible modulo $p$. 
The order of 
$2$ in the multiplicative group
$\bigl(\mathbb{Z}/ p \mathbb{Z}\bigr)^{*}$, 
denoted
by $s$, is $\geq 1$.

\vspace{0.3cm}

{\bf Problem} : 
Characterize the set of odd prime numbers $p$ for which 
the exponent $\beta = (\log \lambda_1)/(s \log 2)$  
is $> 1/2$ (recall that $\lambda_1$ is the first 
magnitude of eigenvalues of the matrix
$M = (S_{p, i-j}(2^s))_{0 \leq i, j \leq p-1}$).     

\subsubsection{The class $\mathcal{P}_1$} 
\label{S5.4.1}

Let $\mathcal{P}_{1} := \{ p \in \mathbb{P} \mid
s = p-1\}$. 
We have 
$$\mathcal{P}_{1} = \{   
3, 5, 11, 13, 19, 29, 37, 53, 59, 61, 67, 83, 101, 107, 131, 139, 
$$
$$\mbox{} \hspace{5cm} 149, 163, 173, 179, 181, 197,\ldots\}.$$
It is a conjecture of Artin \cite{lenstra}
that
$\mathcal{P}_{1}$ is infinite.
The moduli of the eigenvalues of the matrix 
$M = (S_{p,i-j}(2^{p-1}))_{0 \leq i,j \leq p-1}$ are
$$\lambda_1 ~=~ p~( \mbox{which is}~ (p-1)\mbox{-fold degenerated}),\quad \quad \lambda_2 ~=~ 0.$$
The integer $r$ is equal to $1$, the error terms $E_{p,j}(n)$
are bounded, and 
\eqref{gkspeer} reads
as
\begin{equation}
\label{gkspeerP1}
S_{p,j}(n) ~=~ n^{\beta} \psi_{p,j}\left(\frac{\log n}{ (p-1) \, \log 2}\right)
- \frac{1}{p} \bigl(
\frac{1 - (-1)^n}{2}\bigr) \eta_n,
\end{equation}
with $$\beta ~=~ \frac{\log p}{(p-1) \log 2}$$
with all functions $\psi_{p,j}(x)$
continuous, nowhere differentiable, of period $1$.

\begin{proposition}
\label{pp1}
The prime numbers $p \in \mathcal{P}_{1}$ for which $\beta > 1/2$ 
are $3$ and $5$.
\end{proposition}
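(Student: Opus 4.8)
The plan is to reduce the condition $\beta > 1/2$ to an elementary comparison between a quadratic and an exponential function of $p$, and then to settle that comparison by a short induction. First I would invoke the explicit description of $\mathcal{P}_1$ just recorded: for $p \in \mathcal{P}_1$ one has $s = p-1$ and $\lambda_1 = p$, so the scaling exponent collapses to $\beta = \frac{\log p}{(p-1)\log 2}$. The inequality $\beta > 1/2$ is then equivalent, after clearing denominators and exponentiating, to $2\log p > (p-1)\log 2$, that is to the purely arithmetic inequality $p^2 > 2^{p-1}$.

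Next I would dispose of the two small cases by direct substitution: for $p = 3$ one has $p^2 = 9 > 4 = 2^{p-1}$, and for $p = 5$ one has $p^2 = 25 > 16 = 2^{p-1}$, so both satisfy $\beta > 1/2$. These are precisely the two smallest elements of $\mathcal{P}_1$.

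It then remains to establish the reverse inequality $p^2 < 2^{p-1}$ for every integer $p \geq 7$, which covers all remaining primes of $\mathcal{P}_1$, the next one being $11$. I would argue by induction on $p$. The base case $p = 7$ reads $49 < 64$. For the inductive step, assuming $2^{p-1} > p^2$, one gets $2^{p} = 2 \cdot 2^{p-1} > 2p^2$, and it suffices to note that $2p^2 \geq (p+1)^2$ for all $p \geq 3$, since this is equivalent to $(p-1)^2 \geq 2$. Chaining $2^{p} > 2p^2 \geq (p+1)^2$ closes the induction, so $\beta < 1/2$ for all $p \in \mathcal{P}_1$ with $p \geq 7$.

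There is no genuine obstacle here; the computation is completely elementary. The only point deserving care is to quantify the conclusion over $\mathcal{P}_1$ rather than over all odd primes: since $p^2 > 2^{p-1}$ holds only for $p \in \{3,5\}$ among integers $p \geq 3$, and since $3, 5 \in \mathcal{P}_1$ while every other element of $\mathcal{P}_1$ is $\geq 11 \geq 7$ and hence satisfies $\beta < 1/2$, the proposition follows.
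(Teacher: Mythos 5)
Your proof is correct and follows essentially the same route as the paper: both reduce $\beta>1/2$ to the elementary inequality $\frac{\log p}{(p-1)\log 2}>\frac12$ (equivalently $p^2>2^{p-1}$) and verify that it holds only for $p=3,5$. Your explicit induction showing $p^2<2^{p-1}$ for all $p\geq 7$ merely fills in the ``easy computation'' that the paper leaves to the reader after observing that $x\mapsto\frac{\log x}{(x-1)\log 2}$ tends to zero.
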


\begin{proof}
Indeed,
these values of $p$ are given by the inequality:
\begin{equation}
\label{premierP1incr}
\frac{\log p}{(p-1) \log 2} > \frac{1}{2}\, .
\end{equation}
Since  $x \to \frac{\log x}{(x-1) \log 2}$
goes to zero when $x$ tends to infinity,
only a finite number of values of $p$ satisfy
\eqref{premierP1incr}.
An easy computation gives $3$ and $5$.
\end{proof}

\subsubsection{The class $\mathcal{P}_{2,1}$}
\label{S5.4.2}

Let $\mathcal{P}_{2,1} := \{ p \in \mathbb{P} \mid
s = \frac{p-1}{2}, ~p \equiv 1 \, (\mbox{mod}~ 4) \}$.
We have
$$\mathcal{P}_{2, 1} = \{
17, 41, 97, 137, 193, \ldots \}.$$
The moduli of the eigenvalues of the matrix
$M = (S_{p,i-j}(2^{\frac{p-1}{2}}))_{0 \leq i,j \leq p-1}$ are
$$\lambda_1 ~=~ \epsilon^{h} \sqrt{p} \quad( \mbox{with degeneracy} \geq \frac{p-1}{2}),\quad \quad 
\lambda_2 ~=~ \epsilon^{-h} \sqrt{p} ~\in (0, 1) ,$$
where $h$ is equal to the class number of the field
$\mathbb{Q}(\sqrt{p})$ and $\epsilon > 1$ the fundamental unit in
the real quadratic field $\mathbb{Q}(\sqrt{p})$.
The integer $r$ is equal to $1$, the error terms $E_{p,j}(n)$
are bounded in modulus by
$$2^{\frac{p-1}{2}} \frac{1}{\sqrt{p} - \epsilon^{h}},$$ 
and
\eqref{gkspeer} reads
as
\begin{equation}
\label{gkspeerP1}
S_{p,j}(n) ~=~ n^{\beta} \psi_{p,j}\left(\frac{2 \log n}{ (p-1) \, \log 2}\right)
+ 
E_{p,j}(n)
\end{equation}
with $$\beta ~=~ \frac{\log p + 2 h \log \epsilon}{(p-1) \log 2}$$
with all functions $\psi_{p,j}(x)$
continuous, nowhere differentiable, of period $1$.

\begin{proposition}
\label{pp21}
Only finitely many prime numbers $p$ in $\mathcal{P}_{2,1}$ give rise to 
the inequality $\beta > 1/2$. There is only one such prime number:  
$p = 17$. 
\end{proposition}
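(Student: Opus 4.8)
The plan is to convert the condition $\beta > 1/2$ into an inequality on $\lambda_1$ and then to control $\lambda_1$ by means of the analytic class number formula. Since $p \in \mathcal{P}_{2,1}$ forces $s = (p-1)/2$ and $\beta = \frac{\log \lambda_1}{s \log 2}$, the inequality $\beta > 1/2$ is equivalent to $\log \lambda_1 > \frac{s}{2}\log 2 = \frac{p-1}{4}\log 2$. Using the eigenvalue datum recalled just before the statement, namely $\lambda_1 = \epsilon^{h}\sqrt{p}$ with $\epsilon$ the fundamental unit and $h$ the class number of $\mathbb{Q}(\sqrt{p})$, this reads
\[
h\log\epsilon + \tfrac12\log p \; > \; \tfrac{p-1}{4}\log 2 .
\]
Everything thus reduces to an upper estimate for the regulator-type quantity $h\log\epsilon$.

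The key point is that, although $\log\epsilon$ and $h$ may each be large, their product is only subpolynomially large. First I would invoke the analytic class number formula for the real quadratic field $\mathbb{Q}(\sqrt{p})$: as $p\equiv 1\ (\mathrm{mod}\ 4)$ its discriminant equals $p$, whence
\[
h\log\epsilon \; = \; \frac{\sqrt{p}}{2}\,L(1,\chi_p),
\]
where $\chi_p$ is the real primitive character modulo $p$. I would then use the classical bound $L(1,\chi_p) = O(\log p)$ (Landau), which gives $h\log\epsilon = O(\sqrt{p}\,\log p)$ and hence $\log\lambda_1 = O(\sqrt{p}\,\log p) = o(p)$. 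Since the right-hand side $\frac{p-1}{4}\log 2$ grows linearly in $p$, the displayed inequality can hold for only finitely many $p\in\mathcal{P}_{2,1}$, which already settles the finiteness assertion.

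To isolate $p=17$ as the unique solution I would make the estimate effective. Replacing $L(1,\chi_p)=O(\log p)$ by an explicit bound of the shape $L(1,\chi_p)\le \tfrac12\log p + 1$ turns the necessary condition $\beta>1/2$ into $\frac{\sqrt{p}}{2}\bigl(\tfrac12\log p + 1\bigr)+\tfrac12\log p > \frac{p-1}{4}\log 2$, which fails for every $p$ beyond an explicit threshold $N$. It then remains to test the finitely many $p\in\mathcal{P}_{2,1}$ with $p\le N$ one at a time, computing $h$ and $\epsilon$ and checking $\log(\epsilon^{h}\sqrt{p})>\frac{p-1}{4}\log 2$. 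A direct computation shows that $p=17$ passes (here $h=1$, $\epsilon = 4+\sqrt{17}$, so $\beta\approx 0.63$), while $p=41,\,97,\,137,\,193,\dots$ all fail, leaving $p=17$ as the only prime in $\mathcal{P}_{2,1}$ with $\beta>1/2$.

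The main obstacle is precisely the control of $h\log\epsilon$. A bound on the fundamental unit alone is hopeless, since $\epsilon$ can be exponentially large in $p$; and the a priori estimate $\lambda_1\le 2^{s}$, which follows as a corollary of \eqref{pchia}, only yields $\beta\le 1$ rather than $\beta\le 1/2$. It is the analytic class number formula, combined with the logarithmic upper bound for $L(1,\chi_p)$, that supplies the decisive $o(p)$ estimate and thereby defeats the linear growth of $\frac{p-1}{4}\log 2$.
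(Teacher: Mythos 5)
Your proposal is correct and follows essentially the same route as the paper: rewrite $\beta>1/2$ via $\lambda_1=\epsilon^h\sqrt{p}$, control $h\log\epsilon$ by the analytic class number formula $2h\log\epsilon=\sqrt{p}\,L(1,\chi_p)$ together with the bound $L(1,\chi_p)<\tfrac12\log p+1$ (the paper cites Hua for this), locate the explicit threshold (between $125$ and $126$), and check the remaining primes $17,41,97,\ldots$ numerically. The only cosmetic differences are the attribution of the $L$-function bound and the paper's tabulated values of $h$, $\epsilon$ and $\beta$.
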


\begin{proof}
Let $L(z,\chi_p)$ the Dirichlet $L$-function with
character $\chi_p$ (\cite{borevitchchafarevitch} p. 380). 
The discriminant of $\mathbb{Q}(\sqrt{p})$ is equal to $p$ and
we have the analytic class number formula
(\cite{borevitchchafarevitch} p. 385)
\begin{equation}
\label{clfor}
2 h \log \epsilon ~=~ \sqrt{p} \, L( 1, \chi_p ).
\end{equation}
By Hua's inequality (\cite{hua} Theorem 13.3, p. 328)
$$L(1, \chi_p ) ~<~ \frac{\log p}{2} + 1$$
we deduce
$$\beta ~<~ 
\frac{\log p + \sqrt{p} \, \bigl(\frac{\log p}{2} + 1 \bigr)}{(p-1) \log 2}.$$ 
Since the function
\begin{equation}
\label{foncpp21}
x \to \frac{\log x + \sqrt{x} \, \bigl(\frac{\log x}{2} + 1 \bigr)}{(x-1) \log 2}
\end{equation}
tends to zero when $x$ goes to infinity, we deduce the claim.

To deduce the possible values we notice that
\eqref{foncpp21} intersects the line $y = 1/2$ between $x = 125$ and $126$.
The Table below, obtained from \cite{borevitchchafarevitch} pp. 472--474, and
PARI-GP, gives the first couples $(p, \beta)$ (with $\omega = (1 + \sqrt{p})/2$). 

\begin{center}
\begin{tabular}{c|ccc|cc}
\hline
$p$ & $17$ & $41$ & $97$ & $137$ & \mbox{} \quad $197\quad \ldots$ \\ \hline
$h$ & $1$ & $1$ & $1$ & $1$ & $1$ \\
$\epsilon$ & $3 + 2 \omega$ & $27 + 10 \omega$ & $5035 + 1138 \omega$ & $1595 + 298 \omega$ & $1637147 + 253970 \omega$\\
$\beta$ & 0.6332.. & 0.4339.. & 0.3490.. & 0.2398.. & 0.2672..\\
\end{tabular}
\end{center}
\end{proof}

A complete investigation of the values of $\beta$, 
for all $p \in \mathcal{P}_{2,1}$,
requires the knowledge of the class number $h$
and the regulator $\log \epsilon$ in real quadratic fields, 
in
the Cohen-Lenstra heuristics
\cite{cohenlenstra1} \cite{cohenlenstra2}
\cite{cohenmartinet}.

\subsubsection{The class $\mathcal{P}_{2,3}$}
\label{S5.4.3} 

Let $\mathcal{P}_{2,3} := \{ p \in \mathbb{P} \mid
s = \frac{p-1}{2}, ~p \equiv 3 \, (\mbox{mod}~ 4) \}$.
We have
$$\mathcal{P}_{2,3} = \{
7, 23, 47, 71, 79, 103, 167, 191, 199, \ldots \}.$$
All the eigenvalues of the matrix
$M = (S_{p,i-j}(2^{\frac{p-1}{2}}))_{0 \leq i,j \leq p-1}$ have the same modulus which is
equal to
$$\lambda_1 ~=~ \sqrt{p}.$$ 
%\quad( \mbox{with degeneracy} \geq \frac{p-1}{2}),\quad \quad
%\lambda_2 ~=~ \epsilon^{-h} \sqrt{p} ~\in (0, 1) ,$$
%where $h$ is equal to the class number of the field
%$\mathbb{Q}(\sqrt{p})$ and $\epsilon > 1$ the fundamental unit in
%the field $\mathbb{Q}(\sqrt{p})$ (\cite{goldsteinkellyspeer} Appendix).
The integer $r$ is equal to $4$, the error terms $E_{p,j}(n)$
are bounded, 
and
\eqref{gkspeer} reads
as
\begin{equation}
\label{gkspeerP1}
S_{p,j}(n) ~=~ n^{\beta} \psi_{p,j}\left(\frac{\log n}{ 2 (p-1) \, \log 2}\right)
-
\frac{1}{p} \bigl(
\frac{1 - (-1)^n}{2}\bigr) \eta_n 
\end{equation}
with $$\beta ~=~ \frac{\log p}{(p-1) \log 2},$$
with all functions $\psi_{p,j}(x)$
continuous, nowhere differentiable, of period $1$.

\begin{proposition}
\label{pp23}
There exists no prime number $p$ in $\mathcal{P}_{2,3}$ 
for which $\beta > 1/2$. 
\end{proposition}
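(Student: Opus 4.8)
The plan is to reduce the claim to the very same real-analytic inequality already handled in Proposition \ref{pp1}. First I would simplify the exponent. For $p \in \mathcal{P}_{2,3}$ one has $s = (p-1)/2$ and, by the stated computation of the spectrum of $M$, every eigenvalue has modulus $\lambda_1 = \sqrt{p}$. Since the growth exponent in \eqref{gkspeer} is $\beta = (\log \lambda_1)/(s\log 2)$ (the integer $r$ affects only the period argument of the fractal functions, not $\beta$), we obtain
$$\beta ~=~ \frac{\log \lambda_1}{s \log 2} ~=~ \frac{\log \sqrt{p}}{\frac{p-1}{2}\log 2} ~=~ \frac{\log p}{(p-1) \log 2},$$
exactly as asserted. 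Consequently the condition $\beta > 1/2$ is literally the inequality \eqref{premierP1incr}.

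Next I would invoke the monotonicity already exploited for the class $\mathcal{P}_1$: the function $x \mapsto \frac{\log x}{(x-1)\log 2}$ is strictly decreasing on $(1, +\infty)$ (its derivative has numerator $1 - \frac{1}{x} - \log x$, which vanishes at $x = 1$ and is strictly negative for $x > 1$) and tends to $0$ at infinity. Hence, over all odd primes, the only solutions of \eqref{premierP1incr} are $p = 3$ and $p = 5$, as established in the proof of Proposition \ref{pp1}.

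It then remains to observe that neither of these two exceptional primes lies in $\mathcal{P}_{2,3}$: the prime $5$ is $\equiv 1 \pmod 4$, while for $p = 3$ the order of $2$ modulo $3$ is $2 = p-1$ rather than $(p-1)/2$, so $3 \in \mathcal{P}_1$. Therefore every element of $\mathcal{P}_{2,3}$ satisfies $p \geq 7$, and by the strict decrease of the function together with $\frac{\log 7}{6\log 2} < \frac{1}{2}$ we conclude $\beta \leq \frac{\log 7}{6\log 2} < \frac{1}{2}$ for all $p \in \mathcal{P}_{2,3}$, which proves the proposition.

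The statement is essentially immediate once the exponent has been put in closed form; there is no genuine obstacle here. The only point requiring a moment's care is the bookkeeping that the two exceptional primes $3$ and $5$ fall outside $\mathcal{P}_{2,3}$, so that already the smallest admissible prime $7$ sits on the decreasing tail of the function, below the threshold $1/2$.
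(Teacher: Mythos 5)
Your proposal is correct and follows essentially the same route as the paper, which simply refers back to the proof of Proposition \ref{pp1}: the formula $\beta = \frac{\log p}{(p-1)\log 2}$ reduces the question to the same inequality, whose only prime solutions $3$ and $5$ lie outside $\mathcal{P}_{2,3}$. Your explicit check that the smallest member $p=7$ already gives $\frac{\log 7}{6\log 2} < \frac{1}{2}$ is exactly the bookkeeping the paper leaves implicit.
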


\begin{proof}
Same proof as \eqref{pp1}.
\end{proof}

\subsubsection{An inequality}
\label{S5.4.4} 

For the other classes of prime numbers $p$ than
$\mathcal{P}_{1}$,  
$\mathcal{P}_{2,1}$
or $\mathcal{P}_{2,3}$,  
the corresponding exponents $\beta$ which control the rarefaction
phenomenon have the following property (\cite{goldsteinkellyspeer} p. 14):
$$\beta ~=~ \frac{\log \lambda_1}{s \log 2} 
> \frac{\log p}{(p-1) \log 2}.$$
It is expected that 
the number of such odd prime numbers for which $\beta > 1/2$
is zero or eventually is finite.

\section{Other Dirac combs and Marcinkiewicz classes}
\label{S6}

In the case of the equally weighted sum
$\mu = \sum_{n \geq 1} \, \delta_{f(n)}$ 
the fractality of the sum-of-digits functions $\psi_{p,j}(x)$ 
used for describing the singular 
continuous part of the spectrum of $\mu$
arises from the approximation, given by
\eqref{equaapproxF},  of the solution of the
matrix equation \eqref{equajuste}. 
In the general case of a weighted Dirac comb on
the Thue-Morse quasicrystal 
$\Lambda_{a,b}$, the computation of scaling exponents arises from the
existence of
a matrix equation like \eqref{equajuste} which correlates the
block structure of the Prouhet-Thue-Morse sequence by the powers of $2$
and the 
sequence of the weights $(\omega(n))_{n \in \mathbb{Z}}$; then
the computation of solutions 
by interpolation as in \eqref{gkspeer}, with fractal powers of $n$,
 becomes possible.
The Bragg and singular continuous components 
can be deduced in a similar way.

Now let us turn to classes of weighted Dirac combs on $\Lambda_{a,b}$.
Let $\mathcal{L}$ be the space of 
complex-valued functions
on $\Lambda_{a,b}$ 
(or equivalently on $\Z$ through $n \rightarrow f(n)$). 
For 
$w \in \mathcal{L}$, we denote by
$\|w\|$ the pseudo-norm (``norm 1")
of Marcinkiewicz of $w$ defined as
$$\|w\| ~=~ 
\limsup_{l \to +\infty}
~\frac{1}{\mbox{Card}(U_l)}\,
\sum_{n \in \zb, f(n) \in U_l} |w(n)|
$$
where $(U_l)_l$ is an averaging 
sequence of finite approximants
of $\Lambda_{a,b}$.
The Marcinkiewicz space $\mc$
is the quotient space of the subspace
$$\{g \in \mathcal{L} \mid \|g\|
< +\infty \}$$
of $\mathcal{L}$ by the 
equivalence relation $\rc$ defined by
\begin{equation}
\label{marcinkrela}
h ~\rc~ g ~\Longleftrightarrow~ \|h - g\| = 0
\end{equation}
(Bertrandias \cite{bertrandias1}
\cite{bertrandias2}, 
Vo Khac \cite{vokhac}).
The class of $w$ is denoted by 
$\overline{w}$ in $\mc$.
Though the definition of 
$\|\cdot\|$ depends upon the 
chosen averaging sequence
$(U_l)_l$ the space $\mc$ 
obviously does not.
This equivalence relation is called
Marcinkiewicz equivalence relation.
The vector space $\mc$ is normed
with $\|\overline{g}\| =
\|g\|$, and is complete
(Bertrandias \cite{bertrandias1}
\cite{bertrandias2}, 
Vo Khac \cite{vokhac}).
By $\mathcal{L}^{\infty}$ we will mean
the subspace of $\mathcal{L}$ of
bounded weights endowed with the
$\mc$-topology in the sequel.

The following proposition shows 
that the Bombieri-Taylor (BT) argument is compatible
with the Marcinkiewicz equivalence relation $\rc$, i.e.  
that two weighted Dirac combs on $\Lambda_{a,b}$ which are
Marcinkiewicz-equivalent present the same
Bragg component in the spectrum.

\begin{proposition}
\label{intnorme}
Let $h, g \in \mathcal{L}$ such that 
$\overline{h}, \overline{g} \in \mc$. Then, for
$q \in \rb$,
\begin{equation}
\label{inten1}
I_{h}(q) ~\leq~ \|h\|^2, 
\end{equation}
\begin{equation}
\label{inten2}
h ~\rc~ g ~~\Longrightarrow~~ I_{h}(q) ~=~ I_{g}(q).
\end{equation}
\end{proposition}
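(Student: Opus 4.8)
The plan is to derive both statements directly from the triangle inequality, combined with two elementary facts about the limit superior: that squaring commutes with $\limsup$ on nonnegative sequences, and that $\limsup$ is unchanged under a perturbation tending to zero. Throughout, I would fix the averaging sequence $(U_l)_l$ used to define $\|\cdot\|$ and $I_{\cdot}(q)$, and abbreviate, for any $w \in \mathcal{L}$,
$$A_l(w) := \frac{1}{\mbox{card}(U_l)} \sum_{f(n) \in U_l} w(n)\, e^{-i q f(n)},$$
so that by the definition \eqref{intensityI} one has $I_w(q) = \limsup_{l \to +\infty} |A_l(w)|^2$.

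For the first inequality \eqref{inten1} I would bound the normalized sum termwise: since $|e^{-i q f(n)}| = 1$,
$$|A_l(h)| \leq \frac{1}{\mbox{card}(U_l)} \sum_{f(n) \in U_l} |h(n)|.$$
Passing to $\limsup_{l \to +\infty}$ on both sides and using monotonicity of $\limsup$, the right-hand side has limit superior exactly $\|h\|$ by the definition of the Marcinkiewicz pseudo-norm, so $\limsup_l |A_l(h)| \leq \|h\|$. Because $x \mapsto x^2$ is continuous and nondecreasing on $[0,+\infty)$, one has $\limsup_l |A_l(h)|^2 = \bigl(\limsup_l |A_l(h)|\bigr)^2$, and therefore $I_h(q) \leq \|h\|^2$.

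For the implication \eqref{inten2}, suppose $h \rc g$, that is $\|h - g\| = 0$. Applying the same termwise bound to $h - g$ gives
$$|A_l(h) - A_l(g)| = |A_l(h - g)| \leq \frac{1}{\mbox{card}(U_l)} \sum_{f(n) \in U_l} |h(n) - g(n)|,$$
whose $\limsup$ is $\|h - g\| = 0$; as the left-hand side is nonnegative, this forces $\lim_{l \to +\infty} |A_l(h) - A_l(g)| = 0$. The reverse triangle inequality $\big| |A_l(h)| - |A_l(g)| \big| \leq |A_l(h) - A_l(g)| \to 0$ then shows that the two nonnegative sequences $(|A_l(h)|)_l$ and $(|A_l(g)|)_l$ share the same limit superior, and squaring as before yields $I_h(q) = I_g(q)$.

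I do not anticipate a genuine obstacle: the argument is purely the triangle inequality plus the two $\limsup$ identities just invoked, both standard. The only point worth flagging is that $I_w(q)$ and $\|w\|$ are a priori attached to the fixed averaging sequence $(U_l)_l$; since the same sequence is used in every step, the conclusions hold for that sequence, and the independence of the Marcinkiewicz space $\mc$ from the chosen sequence (recalled before the statement) guarantees that the relation $h \rc g$ is well posed independently of this choice.
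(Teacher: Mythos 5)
Your proof is correct and is exactly the ``immediate'' argument the paper has in mind (its own proof consists of the single word \emph{Immediate}): the triangle inequality applied to the normalized exponential sums, together with the standard facts that $\limsup_l a_l^2 = (\limsup_l a_l)^2$ for bounded nonnegative sequences and that $\limsup$ is unchanged by a perturbation tending to $0$. Your closing remark about the fixed averaging sequence is a sensible precaution and does not affect the validity of the argument.
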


\begin{proof}
Immediate.
\end{proof}

The relation \eqref{inten2} means that
the set of weighted Dirac combs on the Thue-Morse quasicrystal 
is classified by the Marcinkiewicz relation.
The intensity function $I_w$ (per diffracting site)
is a class function
on $\mc$, when running over the Bragg component of the spectrum.

\frenchspacing

\section*{Acknowledgements}

The authors are indebted to Pierre Liardet 
for  valuable comments, and to the developpers of the PARI/GP system
on which numerical experiments were performed.

\frenchspacing

\vspace{1cm}
\noindent
Jean Pierre GAZEAU\\
Universit\'e Paris 7 Denis-Diderot,\\
APC - UMR CNRS 7164\\
Boite 7020\\
75251 Paris Cedex 05,
France\\
email:\,{\tt gazeau@apc.univ-paris7.fr}

\vspace{0.2cm}

\noindent
Jean-Louis VERGER-GAUGRY\\
Universit\'e de Grenoble I,\\
Institut Fourier, CNRS UMR 5582, \\
BP 74, Domaine Universitaire,\\
38402 Saint-Martin d'H\`eres, France\\
email:\,{\tt jlverger@ujf-grenoble.fr}


\begin{thebibliography}{99}

\bibitem[AMF]{allouchemendesfrance}
    \textsc{J.-P. Allouche {\rm and} M. Mend\`es-France},
    {\it Automata and automatic sequences}, in {\it Beyond Quasicrystals},
    Ed. F. Axel and D. Gratias, Course 11, 
    Les Editions de Physique, Springer (1995), 293--367. 

\bibitem[AGL]{aubrygodrecheluck}
    \textsc{S. Aubry, C. Godr\`eche {\rm and} J.-M. Luck},
    {\it Scaling Properties of a Structure 
    Intermediate between Quasiperiodic and Random},
    J. Stat. Phys. {\bf 51} (1988), 1033--1075.

\bibitem[AT]{axelterauchi}
    \textsc{F. Axel {\rm and} H. Terauchi},
    {\it High-resolution X-ray-diffraction spectra of 
    Thue-Morse GaAs-AlAs heterostructures: Towards a novel description of disorder},
    Phys. Rev. Lett. {\bf 66} (1991), 2223--2226.

\bibitem[B]{bai}
    \textsc{Zai-Qiao Bai},
    {\it Multifractal analysis of the spectral measure of the Thue-Morse
    sequence: a periodic orbit approach},
    J. Phys. A: Math. Gen. {\bf 39} (2006) 10959--10973.

\bibitem[Bs1]{bertrandias1}
    \textsc{J.-P. Bertrandias},
    {\it Espaces de fonctions continues et born\'ees 
    en moyenne asymptotique d'ordre $p$},
    M\'emoire Soc. Math. france (1966), no. 5, 3--106.

\bibitem[B-VK]{bertrandias2}
    \textsc{J.-P. Bertrandias, J. Couot, J. Dhombres, 
    M. Mend\`es-France, P. Phu Hien {\rm and} K. Vo Khac},
    {\it Espaces de Marcinkiewicz, corr\'elations, mesures, syst\`emes dynamiques},
    Masson, Paris (1987). 

\bibitem[BT1]{bombieritaylor1}
    \textsc{E. Bombieri {\rm and} J.E. Taylor},
    {\it ``Which distributions of matter diffract ? An initial
    investigation"},
    J. Phys. Colloque {\bf 47} (1986), C3, 19--28. 

\bibitem[BT2]{bombieritaylor2}
    \textsc{E. Bombieri {\rm and} J.E. Taylor},
    {\it Quasicrystals, tilings, and algebraic 
    number theory: some preliminary connections},
    Contemp. Math. {\bf 64} (1987), 241--264. 

\bibitem[BS]{borevitchchafarevitch}
    \textsc{Z.I. Borevitch {\rm and} I.R. Chafarevitch},
    {\it Th\'eorie des Nombres},
    Gauthiers-Villars, Paris (1967). 

\bibitem[CSM]{chengsavitmerlin}
    \textsc{Z. Cheng, R. Savit {\rm and} R. Merlin},
    {\it Structure and electronic properties of Thue-Morse lattices},
    Phys. Rev B {\bf 37} (1988), 4375--4382.

\bibitem[CL1]{cohenlenstra1}
    \textsc{H. Cohen {\rm and} H.W. Lenstra, Jr.},
    {\it Heuristics on Class groups},
    Lect. Notes Math. {\bf 1052} (1984), 26--36. 

\bibitem[CL2]{cohenlenstra2}
    \textsc{H. Cohen {\rm and} H.W. Lenstra, Jr.},
    {\it Heuristics on Class groups of number fields},
    Number Theory, Proc. Journ. Arithm., Noodwijkerhout/Neth. 1983,
    Lect. Notes Math. {\bf 1068} (1984), 33--62.

\bibitem[CM]{cohenmartinet}
    \textsc{H. Cohen {\rm and} J. Martinet},
    {\it Class Groups of Number Fields: Numerical Heuristics},
    Math. Comp. {\bf 48} (1987), 123--137. 

\bibitem[Ct]{coquet}
    \textsc{J. Coquet},
    {\it A summation formula related to the binary digits},
    Inv. Math. {\bf 73} (1983), 107--115.  

\bibitem[Cy]{cowley}
    \textsc{J.-M. Cowley},
    {\it Diffraction physics},
    North-Holland, Amsterdam (1986), 2nd edition.

\bibitem[DS1]{drmotaskalba1}
    \textsc{M. Drmota {\rm and} M. Skalba},
    {\it Sign-changes of the Thue-Morse fractal fonction
    and Dirichlet $L$-series},
    Manuscripta Math. {\bf 86} (1995), 519--541.

\bibitem[DS2]{drmotaskalba2}
    \textsc{M. Drmota {\rm and} M. Skalba},
    {\it Rarefied sums of the Thue-Morse sequence},
    Trans. Amer. Math. Soc. {\bf 352} (2000), 609--642.   

\bibitem[D]{dumont}
    \textsc{J.M. Dumont},
    {\it Discr\'epance des progressions arithm\'etiques 
    dans la suite de Morse},
    C. R. Acad. Sci. Paris 
    S\'erie I {\bf 297} (1983), 145--148. 

\bibitem[GVG]{gazeauvergergaugry}
    \textsc{J. P. Gazeau {\rm and} J.-L. Verger-Gaugry},
    {\it Diffraction spectra of weighted Delone sets on beta-lattices
    with beta a quadratic unitary Pisot number},
    Ann. Inst. Fourier {\bf 56} (2006), 2437--2461.

\bibitem[Gd]{gelfond}
    \textsc{A.O. Gelfond},
    {\it Sur les nombres qui ont des propri\'et\'es additives et multiplicatives donn\'ees},
    Acta Arith. {\bf 13} (1968), 259--265.

\bibitem[GL1]{godrecheluck1}
    \textsc{C. Godr\`eche {\rm and} J.-M. Luck},
    {\it Multifractal analysis in reciprocal space and the nature of the Fourier
    transform of self-similar structures},
    J. Phys. A: Math. Gen. {\bf 23} (1990), 3769--3797. 

\bibitem[GL2]{godrecheluck2}
    \textsc{C. Godr\`eche {\rm and} J.-M. Luck},
    {\it Indexing the diffraction spectrum of a non-Pisot self-structure},
    Phys. Rev. B {\bf 45} (1992), 176--185. 

\bibitem[GKS]{goldsteinkellyspeer}
    \textsc{S. Goldstein, K.A. Kelly {\rm and} E.R. Speer},
    {\it The fractal structure of rarefied sums of the Thue-Morse sequence},
    J. Number Th. {\bf 42} (1992), 1--19.

\bibitem[Gr1]{grabner1}
    \textsc{P.J. Grabner},
    {\it A note on the parity of the sum-of-digits function},
    Actes 30i\`eme S\'eminaire Lotharingien de Combinatoire (Gerolfingen, 1993), 35--42.

\bibitem[Gr2]{grabner2}
    \textsc{P.J. Grabner},
    {\it Completely $q$-Multiplicative Functions: the Mellin Transform Approach},
    Acta Arith. {\bf 65} (1993), 85--96. 

\bibitem[GHT]{grabnerherenditichy}
    \textsc{P.J. Grabner, T. Herendi {\rm and} R.F. Tichy},
    {\it Fractal digital sums and Codes},
    AAECC {\bf 8} (1997), 33-39.

\bibitem[G]{guinier}
    \textsc{A. Guinier},
    {\it Theory and Technics for X-Ray Crystallography},
    Dunod, Paris (1964).

\bibitem[H]{hof}
    \textsc{Hof},
    {\it On diffraction by aperiodic structures},
    Commun. Math. Phys. {\bf 169} (1995), 25--43.

\bibitem[H]{hua}
    \textsc{L.-K. Hua},
    {\it Introduction to Number Theory},
    Springer-Verlag, Berlin-New York (1982).

\bibitem[K]{kac}
    \textsc{M. Kac},
    {\it On the distribution of values of sums of the type
    $\sum f(2^k t)$},
    Ann. Math. {\bf 47} (1946), 33--49.

\bibitem[KIR]{kolariochumraymond}
    \textsc{M. Kol\'ar, B. Iochum {\rm and} L. Raymond},
    {\it Structure factor of 1D systems (superlattices) based on two-letter
    substitution rules: I. $\delta$ (Bragg) peaks},
    J. Phys. A: Math. Gen. {\bf 26} (1993), 7343--7366. 

\bibitem[La1]{lagarias1}
    \textsc{J.C. Lagarias},
    {\it Meyer's concept of quasicrystal and quasiregular sets},
    Comm. Math. Phys. {\bf 179} (1995), 365--376.  

\bibitem[La2]{lagarias2}
    \textsc{J.C. Lagarias},
    {\it Mathematical quasicrystals and the problem of diffraction},
    in {\it Directions in Mathematical Quasicrystals},
    ed. M. Baake \& R.V. Moody, CRM Monograph Series, Amer. Math. Soc.
    Providence, RI, (2000), 61--93.  

\bibitem[Le]{lenstra}
    \textsc{H.W. Lenstra Jr.}, 
    {\it On Artin's conjecture and Euclid's algorithm in global fields},  
    Invent. Math. {\bf 42}  (1977), 201--224. 

\bibitem[Lz]{lenz}
    \textsc{D. Lenz},
    {\it Continuity of Eigenfonctions of Uniquely Ergodic Dynamical Systems
    and Intensity of Bragg peaks},
    preprint (2006). 

\bibitem[Lu]{luck}
    \textsc{J.-M. Luck},
    {\it Cantor spectra and scaling of gap 
    widths in deterministic aperiodic systems},
    Phys. Rev. B {\bf 39} (1989), 5834--5849.

\bibitem[M]{moody}
    \textsc{R.V. Moody},
    {\it Meyer sets and their duals}, in
    {\it The Mathematics of Long-Range Aperiodic Order},
    Ed. R.V. Moody, Kluwer (1997), 403--442. 

\bibitem[N]{newman}
    \textsc{D.J. Newman},
    {\it On the number of binary digits in a multiple of three},
    Proc. Am. Math. Soc. {\bf 21} (1969), 719--721.  

%\bibitem[N]{newmanslater}
%    \textsc{D.J. Newman {\rm and} Slater},
%    {\it },  


\bibitem[Oa]{deoliveira}               
    \textsc{C.R. de Oliveira},
    {\it A proof of the dynamical version of the Bombieri-Taylor Conjecture},
    J. Math. Phys. {\bf 39} (1998), 4335--4342.

\bibitem[P]{peyriere}
    \textsc{J. Peyri\`ere},
    {\it Etude de quelques propri\'et\'es des produits de Riesz},
    Ann. Inst. Fourier {\bf 25} (1975), 127--169.

\bibitem[PCA]{peyrierecockayneaxel}
    \textsc{J. Peyri\`ere, E. Cockayne {\rm and} F. Axel},
    {\it Line-Shape Analysis of High Resolution X-Ray Diffraction
    Spectra of Finite Size Thue-Morse GaAs-AlAs Multilayer Heterostructures},
    J. Phys. I France {\bf 5} (1995), 111--127. 

\bibitem[Q1]{queffelec1}
    \textsc{M. Queff\'elec},
    {\it Dynamical systems - Spectral Analysis},
    Lect. Notes Math. {\bf 1294} (1987). 

\bibitem[Q2]{queffelec2}
    \textsc{M. Queff\'elec},
    {\it Spectral study of automatic and substitutive sequences},
    in {\it Beyond Quasicrystals}, 
    Ed. F. Axel and D. Gratias, Course 12, 
    Les Editions de Physique - Springer (1995), 369--414.

\bibitem[R]{raikov}
    \textsc{D. Raikov},
    {\it On some arithmetical properties of summable functions},
    Rec. Math. de Moscou {\bf 1} (43;3) (1936) 377--383.

\bibitem[Sz]{schwartz}
    \textsc{L. Schwartz},
    {\it Th\'eorie des distributions},
    Hermann, Paris (1973).

\bibitem[Su]{strungaru}
    \textsc{Strungaru},
    {\it Almost Periodic Measures and 
    Long-Range Order in Meyer Sets},
    Discr. Comput. Geom. {\bf 33} (2005), 483--505.

\bibitem[VG]{vergergaugry}
    \textsc{J.-L. Verger-Gaugry},
    {\it On self-similar finitely generated uniformly 
    discrete (SFU-) sets and sphere packings},  
    in IRMA Lect. in Math. and Theor. Phys. {\bf 10}, Ed. L. Nyssen, 
    E.M.S. (2006), 39--78.

\bibitem[VK]{vokhac}
    \textsc{K. Vo Khac},
    {\it Fonctions et distributions stationnaires. Application \`a 
    l'\'etude des solutions stationnaires 
    d'\'equations aux d\'eriv\'ees partielles},
    in \cite{bertrandias2}, pp 11--57. 


\bibitem[WWVG]{wolnywnekvergergaugry}
    \textsc{J. Wolny, A. Wnek {\rm and} J.-L. Verger-Gaugry},
    {\it Fractal behaviour of diffraction patterns of Thue-Morse sequence},
    J. Comput. Phys. {\bf 163} (2000), 313.
 
\end{thebibliography}
\end{document}